\newtheorem{theorem}{Theorem}[section]
\newtheorem*{theorem*}{Theorem}
\newtheorem{corollary}[theorem]{Corollary}
\newtheorem{proposition}[theorem]{Proposition}
\newtheorem{definition}[theorem]{Definition}
\newtheorem{example}[theorem]{Example}
\newtheorem{remark}[theorem]{Remark}
\newcommand{\R}{\mathbb{R}}
\newcommand{\N}{\mathbb{N}}
\begin{document}

\title[Classification of real algebraic curves]
{Classification of real algebraic curves under blow-spherical homeomorphisms at infinity}

\author[J. E. Sampaio]{Jos\'e Edson Sampaio}
\author[E. C. da Silva]{Euripedes Carvalho da Silva}

\address{Jos\'e Edson Sampaio:  
              Departamento de Matem\'atica, Universidade Federal do Cear\'a,
	      Rua Campus do Pici, s/n, Bloco 914, Pici, 60440-900, 
	      Fortaleza-CE, Brazil. \newline  
              E-mail: {\tt edsonsampaio@mat.ufc.br}                    
}

\address{Euripedes Carvalho da Silva: Departamento de Matem\'atica, Instituto Federal de Educa\c{c}\~ao, Ci\^encia e Tecnologia do Cear\'a,
 	      Av. Parque Central, 1315, Distrito Industrial I, 61939-140, 
 	      Maracana\'u-CE, Brazil. \newline  
               E-mail: {\tt euripedes.carvalho@ifce.edu.br}
 } 

\thanks{The first named author was partially supported by CNPq-Brazil grant 310438/2021-7. This work was supported by the Serrapilheira Institute (grant number Serra -- R-2110-39576).
}
\keywords{Blow-spherical equivalence; Algebraic curves; Classification of algebraic curves.}
\subjclass[2010]{14B05; 32S50}

\begin{abstract}

In this article, we present a complete classification, with normal forms, of the real algebraic curves under blow-spherical homeomorphisms at infinity.  
\end{abstract}

\maketitle

\tableofcontents

\section{Introduction}
In this article, we study real algebraic curves under blow-spherical homeomorphisms from the global point of view. 
Roughly speaking, two subsets of Euclidean spaces are blow-spherical homeomorphic, if their spherical modifications (see Definition \ref{def:strict_transf}) are homeomorphic and, in particular, this homeomorphism induces a homeomorphism between their tangent links (see Definition \ref{def:bs_homeomorphism}).
This gives an equivalence which lives between topological equivalence and semialgebraic bi-Lipschitz equivalence. 

The study of analytic sets under blow-spherical homeomorphisms from the local point of view has been studied in some works, e.g., \cite{BirbrairFG:2012,BirbrairFG:2017, Sampaio:2015,Sampaio:2020,Sampaio:2022b}.
In \cite{Sampaio:2020}, the first author, among other things, presented a complete classification of the germs of complex analytic curves under blow-spherical homeomorphisms. In \cite{Sampaio:2022b}, the first author presented several results of related to blow-spherical homeomorphism between germs of real analytic sets and, in particular, he presented a classification of germs of real analytic curves under blow-spherical homeomorphisms.

Recently, the authors of this article in \cite{SampaioS:2023} presented a complete classification of complex algebraic curves under (global) blow-spherical homeomorphisms (see Theorem 4.6 in \cite{SampaioS:2023}). They also presented a complete classification of complex algebraic curves under blow-spherical homeomorphisms at infinity with normal forms (see Theorems 4.2 and 4.3 in \cite{SampaioS:2023}). 

So, it becomes natural to try classifying real algebraic curves under blow-spherical homeomorphisms.

The main aim of this article is to present a complete classification of real algebraic curves under blow-spherical homeomorphisms at infinity (see Proposition \ref{propblowspherical}). Moreover, we also present normal forms for this classification (see Proposition \ref{thmnormalform}). We also present a result of realization of our (complete) invariant (see Proposition \ref{thm:realizable}). 

 In order to be a bit more precise, for a real algebraic curve $X$, we define our invariant $k(X,\infty)$ (see Definition \ref{def:relative_mult}), which is a point of $(\mathbb{Z}_{>0})^n$ and $n$ is the cardinality of the link (at 0) of the tangent cone of $X$ at infinity (see Subsection \ref{subsec:tg_cones}), and in Proposition \ref{propblowspherical}, we prove that two real algebraic curve $X$ and $\widetilde{X}$ are blow-spherical homeomorphic at infinity if and only if $k(X,\infty)=k(\widetilde{X},\infty)$.
 
 In Subsection \ref{sec:normal_forms}, we present a collection of real algebraic curves and we prove in Proposition \ref{thmnormalform} that a real algebraic curve is blow-spherical homeomorphic at infinity to exactly one curve of that collection.
 
In Subsection \ref{sec:realization}, we observe that $k(X,\infty)\in \mathcal{N}$, where $\mathcal{N}=\bigcup\limits_{n=1}^{\infty} \mathcal{N}_n$ and $\mathcal{N}_n$ is the set of all $(\eta_1, \eta_2,\cdots,\eta_n)\in (\mathbb{Z}_{>0})^n$ such that $\eta _1\leq \eta _2\leq \cdots \leq \eta_n$. Moreover, given $\eta=(\eta_1,...,\eta_k)\in \mathcal{N}$, we prove in Proposition \ref{thm:realizable} that there is a real algebraic curve $X$ such that $ k(X,\infty)=\eta$ if and only if $\eta_1+...+\eta_k$ is an even number.

\bigskip

\section{Preliminaries}\label{section:cones}
Here, all the real algebraic sets are supposed to be pure dimensional.

\subsection{Definition of the blow-spherical equivalence}
Let us consider the {\bf spherical blowing-up at infinity} (resp. $p$) of $\R^{n+1}$, $\rho_{\infty}\colon\mathbb{S}^n\times (0,+\infty )\to \R^{n+1}$ (resp. $\rho_p\colon\mathbb{S}^n\times [0,+\infty )\to \R^{n+1}$), given by $\rho_{\infty}(x,r)=\frac{1}{r}x$ (resp. $\rho_p(x,r)=rx+p$).

Note that $\rho_{\infty}\colon\mathbb{S}^n\times (0,+\infty )\to \R^{n+1}\setminus \{0\}$ (resp. $\rho_p\colon\mathbb{S}^n\times (0,+\infty )\to \R^{n+1}\setminus \{0\}$) is a homeomorphism with inverse mapping $\rho_{\infty}^{-1}\colon\R^{n+1}\setminus \{0\}\to \mathbb{S}^n\times (0,+\infty )$ (resp. $\rho_p \colon\mathbb{S}^n\times (0,+\infty )\to \R^{n+1}\setminus \{0\}$) given by $\rho_{\infty}^{-1}(x)=(\frac{x}{\|x\|},\frac{1}{\|x\|})$ (resp. $\rho_p^{-1}(x)=(\frac{x-p}{\|x-p\|},\|x-p\|)$).

\begin{definition}\label{def:strict_transf}
 The {\bf strict transform} of the subset $X$ under the spherical blowing-up $\rho_{\infty}$ is $X'_{\infty}:=\overline{\rho_{\infty}^{-1}(X\setminus \{0\})}$ (resp. $X'_{p}:=\overline{\rho_{p}^{-1}(X\setminus \{0\})}$). The subset $X_{\infty}'\cap (\mathbb{S}^n\times \{0\})$ (resp. $X_{p}'\cap (\mathbb{S}^n\times \{0\})$) is called the {\bf boundary} of $X'_{\infty}$ (resp. $X'_p$) and it is denoted by $\partial X'_{\infty}$ (resp. $\partial X'_p$). 
\end{definition}

\begin{definition}\label{def:bs_homeomorphism}
Let $X$ and $Y$ be subsets in $\mathbb{R}^n$ and $\mathbb{R}^m$, respectively. Let $p\in \mathbb{R}^n\cup \{\infty\}$, $q\in \mathbb{R}^m\cup \{\infty\}$. A homeomorphism $\varphi:X\rightarrow Y$ such that $q=\lim\limits_{x\rightarrow p}{\varphi(x)}$ is said a {\bf blow-spherical homeomorphism at $p$}, if  the homeomorphism 
$$\rho^{-1}_{q}\circ \varphi\circ \rho_p \colon X'_p\setminus \partial X'_p\rightarrow Y'_q\setminus \partial Y'_q$$
extends to a homeomorphism $\varphi'\colon X'_p\rightarrow Y'_q$. A homeomorphism $\varphi\colon X\rightarrow Y$ is said a {\bf blow-spherical homeomorphism} if it is a blow-spherical homeomorphism for all $p\in \overline{X}\cup \{\infty\}$. In this case, we say that the sets $X$ and $Y$ are {\bf blow-spherical homeomorphic} or {\bf blow-isomorphic} ({\bf at $(p,q)$}). 
\end{definition}

\begin{definition}\label{def:strong_diffeo}
Let $X$ and $Y$ be subsets in $\mathbb{R}^n$ and $\mathbb{R}^m$, respectively. We say that a blow-spherical homeomorphism $h\colon X\rightarrow Y$ is a {\bf strong blow-spherical homeomorphism} if $h({\rm Sing}_1(X))={\rm Sing}_1(Y)$ and $h|_{X\setminus {\rm Sing}_1(X)}\colon X\setminus {\rm Sing}_1(X)\rightarrow Y\setminus {\rm Sing}_1(Y)$ is a $C^1$ diffeomorphism, where, for $A\subset \R^p$, ${\rm Sing}_k(A)$ denotes the points $x\in A$ such that, for any open neighbourhood $U$ of $x$, $A\cap U$ is not a $C^k$ submanifold of $\R^p$. A blow-spherical homeomorphism at $\infty$, $\varphi\colon X\rightarrow Y$, is said a {\bf strong blow-spherical homeomorphism at $\infty$ } if there are compact sets $K\subset \mathbb{R}^n$ and $\tilde{K}\subset\mathbb{R}^m$ such that $\varphi(X\setminus K)=Y\setminus \tilde{K}$ and the restriction $\varphi|_{X\setminus K}\colon X\setminus K\to Y\setminus \tilde{K}$ is a strong blow-spherical homeomorphism. 
\end{definition}

\begin{remark}
 We have some examples:
 \begin{enumerate}
  \item $\mbox{Id}:X \rightarrow X$ is a blow-spherical homeomorphism for any $X\subset \mathbb{R}^n$;
  \item Let $X\subset \mathbb{R}^n$, $Y\subset \mathbb{R}^m$ and $Z\subset \mathbb{R}^k$ be subsets. If $f\colon X\rightarrow Y$ and $g\colon Y\rightarrow Z$ are blow-spherical homeomorphisms, then $g\circ f\colon X\rightarrow Z$ is a blow-spherical homeomorphism.
 \end{enumerate}

\end{remark}

Thus we have a category called {\bf blow-spherical category}, which is denoted by $\mbox{BS}$, where its objects are all the subsets of Euclidean spaces and its morphisms are all blow-spherical homeomorphisms.

By definition, if $X$ and $Y$ are strongly blow-spherical homeomorphic then they are blow-spherical homeomorphic, but the converse does not hold in general, as we can see in the next example.

\begin{example}
Let $V=\{(x,y)\in\R^2; y=|x|\}$. The mapping $\varphi\colon \R\to V$ given by $\varphi(x)=(x,|x|)$ is a blow-spherical homeomorphism. However, since ${\rm Sing}_1(\R)=\emptyset$ and ${\rm Sing}_1(V)=\{(0,0)\}$, there is no strong blow-spherical homeomorphism $h\colon \R\to V$ and, in particular, $\R$ and $V$ are not strongly blow-spherical homeomorphic.
\end{example}

\begin{proposition}[Proposition 3.12 in \cite{SampaioS:2023}]\label{Lip_implies_bs}
Let $X\subset \mathbb{R}^n$ and $Y\subset \mathbb{R}^m$ be semialgebraic sets. If $h\colon X\rightarrow Y$ is a semialgebraic outer lipeomorphism then $h$ is a blow-spherical homeomorphism. 
\end{proposition}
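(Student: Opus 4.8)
The plan is to prove that a semialgebraic outer lipeomorphism $h\colon X\to Y$ is a blow-spherical homeomorphism, that is, that $h$ is a blow-spherical homeomorphism at every point $p\in\overline{X}\cup\{\infty\}$. By Definition \ref{def:bs_homeomorphism}, this means that for each $p$ (with $q=\lim_{x\to p}h(x)$) the conjugated map $\rho_q^{-1}\circ h\circ\rho_p$, defined on $X'_p\setminus\partial X'_p$, extends continuously to a homeomorphism $\varphi'\colon X'_p\to Y'_q$. Since a semialgebraic outer lipeomorphism is in particular a homeomorphism and its inverse is again a semialgebraic outer lipeomorphism, by symmetry it suffices to prove that $\rho_q^{-1}\circ h\circ\rho_p$ extends \emph{continuously} up to the boundary; the extension of the inverse then provides the inverse of $\varphi'$, giving a homeomorphism.

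First I would fix a point $p$ and treat the two cases $p\in\overline{X}$ and $p=\infty$ uniformly, since the two spherical blowings-up differ only by whether the radial coordinate is $r$ or $1/r$. Using the explicit formula $\rho_p^{-1}(x)=\bigl(\tfrac{x-p}{\|x-p\|},\|x-p\|\bigr)$, a point of $X'_p\setminus\partial X'_p$ is of the form $\bigl(\tfrac{x-p}{\|x-p\|},\|x-p\|\bigr)$ for $x\in X$ near $p$, and the boundary corresponds to the limit $\|x-p\|\to 0$ (resp. $\|x\|\to\infty$). The conjugated map sends this to $\bigl(\tfrac{h(x)-q}{\|h(x)-q\|},\|h(x)-q\|\bigr)$. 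Thus the extension problem reduces to showing that the \textbf{direction} $\tfrac{h(x)-q}{\|h(x)-q\|}$ has a well-defined limit as $x\to p$ along $X$ approaching any boundary direction, and that this limiting assignment on $\partial X'_p$ is continuous.

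The key step is the outer Lipschitz control. Because $h$ is an outer lipeomorphism, there is a constant $C\ge 1$ with $C^{-1}\|x-x'\|\le\|h(x)-h(x')\|\le C\|x-x'\|$ for all $x,x'$, and in particular $C^{-1}\|x-p\|\le\|h(x)-q\|\le C\|x-p\|$ after passing to the limit $q$. This bi-Lipschitz comparison of distances guarantees that the radial coordinate transforms tamely (the ratio $\|h(x)-q\|/\|x-p\|$ stays between $C^{-1}$ and $C$), so no blow-up or collapse of the radial direction occurs. For the angular coordinate I would use that $h$ is semialgebraic together with the bi-Lipschitz inequality: the map $v\mapsto\lim_{t\to 0^+}\tfrac{h(p+tv)-q}{\|h(p+tv)-q\|}$ (for unit vectors $v$ in directions present in $\partial X'_p$) is well defined because, along each semialgebraic arc approaching $p$, the curve selection lemma and the Lipschitz estimate force the secant directions of the image arc to converge; semialgebraicity then upgrades this arc-wise convergence to a genuine continuous map on the whole tangent link $\partial X'_p$. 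This produces the desired continuous boundary map, which is the candidate $\varphi'|_{\partial X'_p}$.

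The main obstacle I anticipate is precisely the continuity of the extension on the boundary $\partial X'_p$, i.e. showing that the limiting direction depends continuously on the boundary point and not just that it exists along each individual arc. The bi-Lipschitz inequality controls lengths but a priori only gives that image directions stay within bounded angular distortion; to promote this to continuity I would invoke semialgebraic triviality (a semialgebraic bi-Lipschitz map has, near $p$, a well-controlled semialgebraic structure, so its spherical modification is a semialgebraic map between the strict transforms) and show that $\rho_q^{-1}\circ h\circ\rho_p$ is itself semialgebraic and bounded near the boundary, hence extends semialgebraically and continuously by the standard fact that a bounded semialgebraic map defined off a semialgebraic set of lower dimension extends continuously across it once one-sided limits exist along arcs. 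Applying the same argument to $h^{-1}$ yields a continuous extension of the inverse, and the two extensions are mutually inverse by uniqueness of limits, completing the proof that $\varphi'$ is a homeomorphism.
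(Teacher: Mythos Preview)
The present paper does not prove this proposition: it is quoted from \cite{SampaioS:2023} (as Proposition~3.12 there) and used without argument, so there is no in-paper proof to compare your attempt against.

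On the substance of your sketch: the outline is the natural one, and you have correctly isolated the crux, namely continuity of the boundary extension on $\partial X'_p$. However, the principle you invoke to close this gap --- that a bounded semialgebraic map defined off a lower-dimensional set extends continuously once arc-wise limits exist --- is false as stated. The map $(x,y)\mapsto x/\sqrt{x^2+y^2}$ on $\mathbb{R}^2\setminus\{0\}$ is bounded and semialgebraic, and along every semialgebraic arc into $0$ it has a limit (by o-minimal monotonicity), yet it admits no continuous extension to $0$. What actually makes the argument work here is the bi-Lipschitz estimate itself, not abstract semialgebraicity: from $C^{-1}\|x-x'\|\le\|h(x)-h(x')\|\le C\|x-x'\|$ together with $C^{-1}\|x-p\|\le\|h(x)-q\|\le C\|x-p\|$ one obtains, by an elementary computation, a bound of the form
\[
\left\|\frac{h(x)-q}{\|h(x)-q\|}-\frac{h(x')-q}{\|h(x')-q\|}\right\|
\;\le\; 2C^{2}\,\frac{\|x-x'\|}{\min(\|x-p\|,\|x'-p\|)},
\]
and the right-hand side is controlled by the distance between $(\tfrac{x-p}{\|x-p\|},\|x-p\|)$ and $(\tfrac{x'-p}{\|x'-p\|},\|x'-p\|)$ in $X'_p$. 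This shows directly that $\rho_q^{-1}\circ h\circ\rho_p$ is uniformly continuous near $\partial X'_p$ and therefore extends; the same applied to $h^{-1}$ gives the inverse extension. Your appeal to ``semialgebraic triviality'' is too vague to substitute for this estimate, and the semialgebraic hypothesis is in fact only needed to guarantee that $q=\lim_{x\to p}h(x)$ exists (including at $p=\infty$) and that the strict transforms are well behaved, not for the continuity step itself.
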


\begin{example}
$V=\{(x,y)\in\R^2; y^2=x^3\}$ and $W=\{(x,y)\in\R^2; y^2=x^5\}$ are blow-spherical homeomorphic, but are not outer bi-Lipschitz homeomorphic.
\end{example}

\subsection{Tangent Cones}\label{subsec:tg_cones}
Let $X\subset \R^{n+1}$ be an unbounded semialgebraic set (resp. subanalytic set with $p\in \overline{X}$).
We say that $v\in \R^{n+1}$ is a tangent vector of $X$ at infinity (resp. $p$) if there are a sequence of points $\{x_i\}\subset X$ tending to infinity (resp. $p$) and a sequence of positive real numbers $\{t_i\}$ such that 
$$\lim\limits_{i\to \infty} \frac{1}{t_i}x_i= v \quad (\mbox{resp. } \lim\limits_{i\to \infty} \frac{1}{t_i}(x_i-p)= v).$$
Let $C(X,\infty)$ (resp. $C(X,p)$) denote the set of all tangent vectors of $X$ at infinity (resp. $p$). We call $C(X,\infty)$ the {\bf tangent cone of $X$ at infinity} (resp. $p$). 

We have the following characterization.
\begin{corollary}[Corollary 2.16 \cite{FernandesS:2020}]\label{corollary 2.16FernandesS:2020}
Let $X\subset \R^n$ be an unbounded semialgebraic set. Then  
$C(X,\infty)=\{v\in\R^n;\, \exists \gamma:(\varepsilon,+\infty )\to Z$ $C^0$ semialgebraic such that $\lim\limits _{t\to +\infty }|\gamma(t)|=+\infty$ and $\gamma(t)=tv+o_{\infty }(t)\}$, where $g(t)=o_{\infty }(t)$ means $\lim\limits_{t\to +\infty}\frac{g(t)}{t}=0$.
\end{corollary}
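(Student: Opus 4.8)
The plan is to prove the two inclusions separately, denoting by $T$ the set on the right-hand side; the reverse inclusion is the substantive one. For $T\subseteq C(X,\infty)$ I would simply sample a curve $\gamma$ realizing $v\in T$ along a sequence $t_i\to+\infty$. Setting $x_i=\gamma(t_i)\in X$, the hypothesis $|\gamma(t)|\to+\infty$ gives $x_i\to\infty$, while $\gamma(t)=tv+o_{\infty}(t)$ gives $\frac{1}{t_i}x_i=v+\frac{o_{\infty}(t_i)}{t_i}\to v$; thus $v\in C(X,\infty)$ directly from the definition.

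For the reverse inclusion I would first record that $C(X,\infty)$ is a cone: if $x_i/t_i\to v$ then $x_i/(t_i/\lambda)\to\lambda v$ for $\lambda>0$, and $0\in C(X,\infty)$ always (take $t_i=\|x_i\|^2$). So fix $v\in C(X,\infty)$ with $v\neq 0$ and aim to produce the desired curve. The key observation linking the two sides is that a nonzero tangent vector at infinity corresponds to a boundary point of the strict transform: if $x_i\to\infty$, $t_i>0$ and $x_i/t_i\to v\neq0$, then $t_i\to\infty$, $x_i/\|x_i\|\to v/|v|$ and $1/\|x_i\|\to 0$, so $\rho_{\infty}^{-1}(x_i)\to(v/|v|,0)$, i.e. $(v/|v|,0)\in X'_{\infty}$. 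Since $A:=\rho_{\infty}^{-1}(X\setminus\{0\})\subset\mathbb{S}^n\times(0,+\infty)$ is semialgebraic (Tarski--Seidenberg applied to the semialgebraic map $x\mapsto(x/\|x\|,1/\|x\|)$), the semialgebraic Curve Selection Lemma yields a continuous semialgebraic $\tilde\gamma=(\theta,r)\colon[0,\delta)\to X'_{\infty}$ with $\tilde\gamma(0)=(v/|v|,0)$ and $\tilde\gamma(s)\in A$ for $s>0$. Pushing forward, $\gamma(s):=\rho_{\infty}(\tilde\gamma(s))=\theta(s)/r(s)\in X$ is continuous semialgebraic on $(0,\delta)$ with $|\gamma(s)|=1/r(s)\to+\infty$ and $\gamma(s)/|\gamma(s)|=\theta(s)\to v/|v|$ as $s\to0^+$.

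It then remains to reparametrize $\gamma$ so that the radial growth is exactly linear with slope $|v|$. Here I would use that the semialgebraic function $\phi(s):=|\gamma(s)|$ is piecewise monotone, so after shrinking $\delta$ it is a strictly decreasing bijection of $(0,\delta)$ onto some $(b,+\infty)$ with continuous semialgebraic inverse $\psi$. Define $\Gamma(t):=\gamma(\psi(|v|t))$ for $t>b/|v|$. Then $\Gamma$ is continuous semialgebraic, $|\Gamma(t)|=|v|t\to+\infty$, and $\Gamma(t)/t=|v|\,\theta(\psi(|v|t))\to|v|\cdot v/|v|=v$ because $\psi(|v|t)\to0^+$; hence $\Gamma(t)=tv+o_{\infty}(t)$ and $v\in T$. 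For $v=0$ the same scheme applies starting from any nonzero tangent direction, which exists because $\mathbb{S}^n$ is compact and $X$ is unbounded (a subsequence of $x_i/\|x_i\|$ converges to a unit tangent vector): using the sublinear reparametrization $\Gamma(t):=\gamma(\psi(\sqrt{t}))$ gives $|\Gamma(t)|=\sqrt{t}\to+\infty$ and $\Gamma(t)/t=\theta(\cdot)/\sqrt{t}\to0$, so $0\in T$ as well.

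I expect the main obstacle to be the careful semialgebraic bookkeeping in the reverse inclusion: justifying that the Curve Selection Lemma can be applied at the boundary point $(v/|v|,0)$ of the strict transform and returns a curve whose interior genuinely lies in $A$ (so that $\gamma$ maps into $X$), and controlling the reparametrization through the monotone inverse $\psi$ while keeping everything continuous and semialgebraic. The passage from the mere convergence of the direction $\gamma(s)/|\gamma(s)|$ to $v/|v|$ up to the sharp asymptotic $\Gamma(t)=tv+o_{\infty}(t)$ is exactly what this reparametrization delivers, and is the heart of the argument.
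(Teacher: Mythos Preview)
The paper does not give its own proof of this statement: it is quoted verbatim as Corollary~2.16 of \cite{FernandesS:2020} and used as an input. There is therefore nothing in the present paper to compare your argument against.

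That said, your proposal is a correct and standard proof. The forward inclusion $T\subseteq C(X,\infty)$ is immediate as you say. For the reverse inclusion your route via the strict transform is exactly the right idea: the point $(v/|v|,0)$ lies in the closure of the semialgebraic set $A=\rho_\infty^{-1}(X\setminus\{0\})$, so the semialgebraic Curve Selection Lemma produces an arc in $A$ landing at that point, and pushing it back through $\rho_\infty$ gives a semialgebraic curve in $X$ whose direction converges to $v/|v|$. The reparametrization step is also fine: $|\gamma(s)|$ is a one-variable semialgebraic function tending to $+\infty$ as $s\to 0^+$, hence eventually strictly monotone with semialgebraic inverse, and composing with this inverse upgrades directional convergence to the asymptotic $\Gamma(t)=tv+o_\infty(t)$. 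Your treatment of $v=0$ via a sublinear reparametrization of any branch is likewise correct. The only cosmetic point is that the target of $\gamma$ in the statement should be $X$ rather than $Z$ (a typo in the paper), which you tacitly correct.
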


Thus, we have the following 
\begin{corollary}[Corollary 2.18 \cite{FernandesS:2020}]\label{dimension_tg_cone}
Let $Z\subset \R^n$ be an unbounded semialgebraic set. Let $\phi:\R^n\setminus\{0\}\to \R^n\setminus\{0\}$ be the semialgebraic mapping given by $\phi(x)=\frac{x}{\|x\|^2}$ and denote $X=\phi(Z\setminus \{0\})$. Then $C(Z,\infty)$ is a semialgebraic set satisfying $C(Z,\infty)=C(X,0)$ and $\dim_{\R} C(Z,\infty) \leq \dim_{\R} Z$.
\end{corollary}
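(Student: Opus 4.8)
The plan is to exploit that the inversion $\phi(x)=\frac{x}{\|x\|^2}$ is a semialgebraic involution of $\R^n\setminus\{0\}$ which interchanges the asymptotic behaviour ``at infinity'' with the local behaviour ``at $0$'' while \emph{fixing tangent vectors}. First I would record the formal features of $\phi$: it is a semialgebraic $C^1$-diffeomorphism of $\R^n\setminus\{0\}$ onto itself with $\phi\circ\phi=\mathrm{id}$ and $\|\phi(x)\|=1/\|x\|$. Hence $X=\phi(Z\setminus\{0\})$ is semialgebraic by Tarski--Seidenberg, $X\subset\R^n\setminus\{0\}$, and $\phi(X)=Z\setminus\{0\}$. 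Since $Z$ is unbounded there are $x_i\in Z$ with $\|x_i\|\to\infty$, so $\phi(x_i)\to 0$ and thus $0\in\overline{X}$, making $C(X,0)$ meaningful. Because the local tangent cone of a semialgebraic set is itself semialgebraic, the semialgebraicity of $C(Z,\infty)$ will follow automatically once the equality $C(Z,\infty)=C(X,0)$ is proved.

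For that equality I would argue directly from the sequential definition of both cones. The value $v=0$ lies in both (for $x_i\to\infty$ in $Z$ take $t_i=\|x_i\|^2$; for $y_i\to 0$ in $X$ take $s_i=\|y_i\|^{1/2}$), so fix $v\neq 0$. If $v\in C(Z,\infty)$, choose $x_i\in Z$ with $\|x_i\|\to\infty$ and $t_i>0$ with $x_i/t_i\to v$. Writing $x_i=t_i(v+o(1))$ gives $\|x_i\|=t_i(\|v\|+o(1))$, whence for $y_i:=\phi(x_i)\in X$ and $s_i:=\tfrac{1}{t_i\|v\|^2}>0$ one computes
\[
\frac{y_i}{s_i}=\frac{t_i\|v\|^2}{t_i^2(\|v\|+o(1))^2}\,x_i=\frac{\|v\|^2\,(v+o(1))}{(\|v\|+o(1))^2}\longrightarrow v,
\]
while $y_i\to 0$; hence $v\in C(X,0)$. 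Since $\phi(X)=Z\setminus\{0\}$ and $\phi$ is an involution, the reverse inclusion is the identical computation with the roles of $Z$ and $X$ (and of ``$\infty$'' and ``$0$'') exchanged, giving $C(Z,\infty)=C(X,0)$. If preferred, one can feed the semialgebraic curve $\gamma$ provided by Corollary \ref{corollary 2.16FernandesS:2020} through $\phi$ to obtain honest asymptotic expansions and bypass the $o(1)$ bookkeeping along sequences.

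For the dimension bound I would use that $\phi$ preserves dimension, being a semialgebraic diffeomorphism of $\R^n\setminus\{0\}$: thus $\dim_\R X=\dim_\R(Z\setminus\{0\})=\dim_\R Z$, the last equality because deleting a single point does not change the dimension of $Z$ (which is at least one-dimensional, being unbounded semialgebraic). Combining this with the classical inequality $\dim_\R C(X,0)\le\dim_\R X$ for the local tangent cone of a semialgebraic set yields $\dim_\R C(Z,\infty)=\dim_\R C(X,0)\le\dim_\R X=\dim_\R Z$.

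I expect the obstacle to be organizational rather than conceptual: the heart of the matter is the one-line observation that inversion fixes tangent vectors, and everything else rests on standard facts about semialgebraic sets (preservation of semialgebraicity and of dimension under $\phi$, together with the semialgebraicity and dimension bound of the local tangent cone), which I would cite rather than reprove. The only point requiring genuine care is making the $o(1)$ estimates rigorous uniformly along the chosen sequence, and invoking the semialgebraic curve of Corollary \ref{corollary 2.16FernandesS:2020} removes any ambiguity there.
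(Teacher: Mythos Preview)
The paper does not supply its own proof of this corollary: it is quoted verbatim as Corollary~2.18 of \cite{FernandesS:2020} and used as a black box, so there is no in-paper argument to compare against. Your proposal is a correct and self-contained proof. The sequential computation showing that inversion fixes tangent directions is accurate, the appeal to standard semialgebraic facts (Tarski--Seidenberg for the image, semialgebraicity and the dimension bound for the local tangent cone, dimension invariance under a semialgebraic diffeomorphism) is appropriate, and the observation that an unbounded semialgebraic set has dimension at least one justifies $\dim_\R(Z\setminus\{0\})=\dim_\R Z$. The only cosmetic point is that the $v=0$ case is automatic (both cones are nonempty cones through the origin), so you could drop that sentence; otherwise the argument is clean.
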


\begin{remark}\label{remarksimplepointcone}
{\rm Another way to present the tangent cone at infinity (resp. $p$) of a subset $X\subset\R^{n+1}$ is via the spherical blow-up at infinity (resp. $p$) of $\R^{n+1}$. In fact, if $X\subset \R^{n+1}$ is a semialgebraic set, then $\partial X'_{\infty}=(C(X,\infty)\cap \mathbb{S}^n)\times \{0\}$ (resp. $\partial X'_{p}=(C(X,p)\cap \mathbb{S}^n)\times \{0\}$).}
\end{remark}

\subsection{Relative multiplicities}

Let $X\subset \R^{m+1}$ be a $d$-dimensional subanalytic subset and $p\in \R^{m+1}\cup \{\infty\}$. We say $x\in\partial X'_p$ is a {\bf  simple point of $\partial X'_p$}, if there is an open subset $U\subset \R^{m+2}$ with $x\in U$ such that:
\begin{itemize}
\item [a)] the connected components $X_1,\cdots , X_r$ of $(X'_p\cap U)\setminus \partial X'_{p}$ are topological submanifolds of $\R^{m+2}$ with $\dim X_i=\dim X$, for all $i=1,\cdots,r$;
\item [b)] $(X_i\cup \partial X'_{p})\cap U$ is a topological manifold with boundary, for all $i=1,\cdots ,r$. 
\end{itemize}
Let ${\rm Smp}(\partial X'_{p})$ be the set of simple points of $\partial X'_{p}$ and we define $C_{\rm Smp}(X,p)=\{t\cdot x; \, t>0\mbox{ and }x\in {\rm Smp}(\partial X'_{p})\}$. Let $k_{X,p}\colon {\rm Smp}(\partial X'_{p})\to \N$ be the function such that $k_{X,p}(x)$ is the number of connected components of the germ $(\rho_{p}^{-1}(X\setminus\{p\}),x)$. 

\begin{remark}\label{remarksimplepointdense}
	It is known that ${\rm Smp}(\partial X'_p)$ is an open dense subset of the $(d-1)$-dimensional part of $\partial X'_p$ whenever $\partial X'_p$ is a $(d-1)$-dimensional subset. where $d=\dim X$ (see \cite{Pawlucki:1985}). 
\end{remark} 
\begin{definition}\label{def:relative_mult}
It is clear the function $k_{X,p}$ is locally constant. In fact, $k_{X,p}$ is constant on each connected component $X_j$ of ${\rm Smp}(\partial X'_{p})$. Then, we define {\bf the relative multiplicity of $X$ at $p$ (along of $X_j$)} to be $k_{X,p}(X_j):=k_{X,p}(x)$ with $x\in X_j$. Let $X_1,...,X_r$ be the connected components of ${\rm Smp}(\partial X'_{p})$. By reordering the indices, if necessary, we assume that $k_{X,p}(X_1)\leq \cdots \leq k_{X,p}(X_r)$. Then we define $k(X,p)=(k_{X,p}(X_1),...,k_{X,p}(X_r))$.
\end{definition}

\begin{remark}
\label{rel_mult_number_of_branches}
Let $X\subset \R^n$ be an unbounded real algebraic curve such that $C(X,\infty)\cap \mathbb{S}^{n-1}=\{a_1,...,a_r\}$. By 
the Conical Structure Theorem at infinity for semialgebraic sets that there exists a constant $R_0\gg 1$ such that for all $R\geq R_0$, we have
$$
X\setminus B_R(0)=\bigcup_{j=1}^r\bigcup_{l=1}^{k_j}{\Gamma_{j,l}}
$$ 
and there is a semialgebraic diffeomorphism $h\colon X\setminus B_R(0)\rightarrow {\rm Cone}_{\infty}(X\cap \mathbb{S}^{n-1}_R(0))$ such that $\|h(x)\|=\|x\|$ and $h|_{X\cap \mathbb{S}^{n-1}_R(0)}=Id$ and, moreover, for each $j\in\{1,...,r\}$, 
$C(\Gamma_{j,l},\infty)\cap \mathbb{S}^{n-1}=\{a_j\}$ for all $l=1,...,k_j$. These $\Gamma_{j,l}$'s are called the {\bf branches of $X$ at infinity}. In particular, $k_j=k_{X,\infty}(a_j,0)$, i.e.,
$k_{X,\infty}(a_i,0)$ is the number of branches of $X$ at infinity that are tangent to $a_i$ at infinity.
\end{remark}

\begin{proposition}[Proposition 3.5 in \cite{SampaioS:2023}]\label{propinvarianciamultiplicidaderelativa}
	Let $X$ and $Y$ be semialgebraic sets in $\mathbb{R}^n$ and $\mathbb{R}^m$ respectively. Let $\varphi\colon X\rightarrow Y$ be a blow-spherical homeomorphism at $p\in \mathbb{R}^n\cup \{\infty\}$. Then  
	$$k_{X,p}(x)=k_{Y,q}(\varphi'(x)),$$
	for all $x\in Smp(\partial X'_p)\subset \partial X'_p$, where $q=\lim\limits_{x\to p}\varphi(x)$. In particular, $k(X,p)=k(Y,q)$.
\end{proposition}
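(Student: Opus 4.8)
The plan is to show that the relative multiplicity function is preserved by the induced homeomorphism $\varphi'$ on strict transforms, and this is essentially a local-topological statement at each simple point. First I would use the hypothesis that $\varphi$ is a blow-spherical homeomorphism at $p$: by Definition \ref{def:bs_homeomorphism}, the map $\rho_q^{-1}\circ\varphi\circ\rho_p\colon X'_p\setminus\partial X'_p\to Y'_q\setminus\partial Y'_q$ extends to a homeomorphism $\varphi'\colon X'_p\to Y'_q$. In particular $\varphi'$ restricts to a homeomorphism $\partial X'_p\to \partial Y'_q$ of the boundaries.

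The key observation is that the relative multiplicity $k_{X,p}(x)$ admits a purely topological description in terms of the germ of $X'_p$ at $x$. By Remark \ref{remarksimplepointcone} and the definition of the strict transform, $\rho_p^{-1}(X\setminus\{p\})=X'_p\setminus\partial X'_p$ away from the boundary, so the number $k_{X,p}(x)$ of connected components of the germ $(\rho_p^{-1}(X\setminus\{p\}),x)$ equals the number of connected components of the germ $(X'_p\setminus\partial X'_p,x)$ at a point $x\in\partial X'_p$. The crucial point is that this connected-component count is a topological invariant of the pointed germ $(X'_p,\partial X'_p,x)$: if $g$ is any local homeomorphism of a neighbourhood of $x$ in $X'_p$ onto a neighbourhood of $g(x)$ in $Y'_q$ carrying $\partial X'_p$ to $\partial Y'_q$, then $g$ induces a bijection between the local connected components of $X'_p\setminus\partial X'_p$ at $x$ and those of $Y'_q\setminus\partial Y'_q$ at $g(x)$.

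Second I would verify that a simple point is carried to a simple point, so that the statement is well-posed. Since $\varphi'$ is a homeomorphism taking $\partial X'_p$ to $\partial Y'_q$, the defining conditions (a) and (b) of a simple point—that the local connected components of $(X'_p\cap U)\setminus\partial X'_p$ are topological manifolds of the correct dimension and that their unions with the boundary are manifolds-with-boundary—are preserved under $\varphi'$, because these are purely topological properties and $\dim$ is a topological invariant. Hence $\varphi'({\rm Smp}(\partial X'_p))={\rm Smp}(\partial Y'_q)$, and for $x\in{\rm Smp}(\partial X'_p)$ we obtain $k_{X,p}(x)=k_{Y,q}(\varphi'(x))$ by the invariance of the local component count established above.

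Finally, for the ``in particular'' clause I would pass from the pointwise equality to the equality of the ordered tuples $k(X,p)$ and $k(Y,q)$. Since $\varphi'$ maps ${\rm Smp}(\partial X'_p)$ homeomorphically onto ${\rm Smp}(\partial Y'_q)$, it induces a bijection between their connected components; by Definition \ref{def:relative_mult} the function $k_{X,p}$ is constant on each component with the same value as $k_{Y,q}$ on the image component, so the two unordered collections of values agree, and after the reordering prescribed in the definition the ordered tuples coincide. I expect the main obstacle to be the careful justification of the topological-invariance claim for the local component count at boundary points, since one must argue at the level of germs and ensure that $\varphi'$ being merely a homeomorphism (not assumed smooth) still matches up the components of $X'_p\setminus\partial X'_p$ near $x$ with those of $Y'_q\setminus\partial Y'_q$ near $\varphi'(x)$; this is where the extension of $\varphi'$ across the boundary, guaranteed by Definition \ref{def:bs_homeomorphism}, does the essential work.
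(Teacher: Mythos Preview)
The paper does not contain a proof of this proposition; it is simply quoted from \cite{SampaioS:2023} as an imported result, so there is no in-paper argument to compare your proposal against. That said, your approach is the natural and correct one: since $\varphi'\colon X'_p\to Y'_q$ is a homeomorphism of pairs carrying $\partial X'_p$ onto $\partial Y'_q$, and since $k_{X,p}(x)$ is nothing but the number of local connected components of $X'_p\setminus\partial X'_p$ at $x$, this count is preserved by any such homeomorphism of pairs; likewise the defining conditions (a) and (b) for simple points are purely topological, so $\varphi'$ restricts to a bijection ${\rm Smp}(\partial X'_p)\to{\rm Smp}(\partial Y'_q)$, and the equality of ordered tuples follows. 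Your identified ``main obstacle'' is not really one: the extension of $\varphi'$ across the boundary is exactly what the definition of blow-spherical homeomorphism provides, and the germ-level argument is standard point-set topology.
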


\begin{proposition}[Proposition 3.6 in \cite{SampaioS:2023}]\label{coneinvarianteblow}
 If $h\colon X\rightarrow Y$ is a blow-spherical homeomorphism at $p\in X\cup \{\infty\}$, then $C(X,p)$ and $C(Y,h(p))$ are blow-spherical homeomorphic at $0$.
\end{proposition}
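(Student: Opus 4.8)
The plan is to push the boundary homeomorphism furnished by Definition~\ref{def:bs_homeomorphism} down onto the links of the two tangent cones and then cone it off radially. Write $q=\lim\limits_{x\to p}h(x)$ and let $h'\colon X'_p\to Y'_q$ be the homeomorphism extending $\rho_q^{-1}\circ h\circ \rho_p$. First I would observe that $h'$ carries boundary to boundary. Indeed, by definition $h'$ restricts to the homeomorphism $\rho_q^{-1}\circ h\circ \rho_p\colon X'_p\setminus \partial X'_p\to Y'_q\setminus \partial Y'_q$; since $h'$ is a homeomorphism of $X'_p$ onto $Y'_q$ and $\partial X'_p=X'_p\setminus (X'_p\setminus \partial X'_p)$, it follows that $h'(\partial X'_p)=\partial Y'_q$. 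By Remark~\ref{remarksimplepointcone} we may identify $\partial X'_p=(C(X,p)\cap \mathbb{S}^{n-1})\times \{0\}$ and $\partial Y'_q=(C(Y,q)\cap \mathbb{S}^{m-1})\times \{0\}$, so $h'|_{\partial X'_p}$ yields a homeomorphism $\psi\colon C(X,p)\cap \mathbb{S}^{n-1}\to C(Y,q)\cap \mathbb{S}^{m-1}$ between the links of the two tangent cones.

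Next I would build the required map by coning off $\psi$. Since $C(X,p)$ and $C(Y,q)$ are cones (invariant under multiplication by positive scalars), each is the union of the rays through the points of its link. Define $\Phi\colon C(X,p)\to C(Y,q)$ by $\Phi(0)=0$ and $\Phi(v)=\|v\|\cdot \psi(v/\|v\|)$ for $v\neq 0$. Because $\psi$ takes values in $C(Y,q)\cap \mathbb{S}^{m-1}$ and $C(Y,q)$ is a cone, $\Phi$ is well defined with image $C(Y,q)$; its inverse is the cone-off of $\psi^{-1}$, and $\Phi$ is continuous at the vertex since $\|\Phi(v)\|=\|v\|$. Hence $\Phi$ is a homeomorphism with $\lim\limits_{v\to 0}\Phi(v)=0$.

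Finally I would check that $\Phi$ is blow-spherical at $0$ by a direct computation on strict transforms. For a cone $C$ with link $L=C\cap \mathbb{S}$ one has $\rho_0^{-1}(C\setminus \{0\})=L\times (0,+\infty)$ (as $C$ is a cone and $L$ is closed), whence $C'_0=L\times [0,+\infty)$ and $\partial C'_0=L\times \{0\}$. Writing $v=rx$ with $x\in \mathbb{S}^{n-1}$ and $r>0$, one has $\Phi(rx)=r\,\psi(x)$, so that $\rho_0^{-1}\circ \Phi\circ \rho_0(x,r)=(\psi(x),r)$ on $(C(X,p)\cap \mathbb{S}^{n-1})\times (0,+\infty)$. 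This map is exactly $\psi\times \mathrm{id}_{(0,+\infty)}$, which extends continuously to the homeomorphism $\psi\times \mathrm{id}_{[0,+\infty)}$ of $C(X,p)'_0$ onto $C(Y,q)'_0$. Therefore $\Phi$ is a blow-spherical homeomorphism at $0$, as required.

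I expect the only delicate point to be the bookkeeping that legitimizes the radial cone-off: verifying, via Remark~\ref{remarksimplepointcone}, that $\partial X'_p$ really is the link of $C(X,p)$, and that $C(X,p)$ is genuinely a cone, so that the formula $\Phi(v)=\|v\|\,\psi(v/\|v\|)$ defines a continuous map all the way down to the vertex. Everything after that is the transparent identification of the strict transform of a cone with the cylinder over its link, on which $\Phi$ acts as $\psi\times \mathrm{id}$.
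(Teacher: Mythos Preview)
The paper does not prove this proposition; it is cited without proof as Proposition~3.6 of \cite{SampaioS:2023}. Your argument is correct and is the natural one: the extension $h'$ restricts to a homeomorphism of boundaries, Remark~\ref{remarksimplepointcone} identifies these with the links of the tangent cones, and the radial cone-off of the resulting link homeomorphism $\psi$ is blow-spherical at $0$ precisely because the strict transform of a cone at its vertex is the cylinder $L\times[0,+\infty)$, on which the induced map is $\psi\times\mathrm{id}$. The only caveat is that Remark~\ref{remarksimplepointcone} is stated for semialgebraic $X$, but the identification $\partial X'_p=(C(X,p)\cap\mathbb{S}^{n})\times\{0\}$ holds for arbitrary subsets (both inclusions follow directly from the definitions of strict transform and tangent cone), and $C(X,p)$ is always a closed cone, so the bookkeeping you flag causes no trouble.
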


\section{On the classification of real algebraic curves}
In this section, we present some examples and results about the classification of real algebraic curves under blow-spherical homeomorphisms.

\begin{proposition}\label{propblowspherical}
Let $X,\widetilde X\subset \R^n$ be two real semialgebraic curves. 
Then the following statements are equivalent:
\begin{enumerate}
\item [(1)] $X$ and $\widetilde X$ are blow-spherical homeomorphic at infinity;
\item [(2)] $k(X,\infty)=k(\widetilde X,\infty)$;
\item [(3)] $X$ and $\widetilde X$ are strongly blow-spherical homeomorphic at infinity;
\end{enumerate}
\end{proposition}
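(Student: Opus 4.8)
The plan is to establish the cycle $(3)\Rightarrow(1)\Rightarrow(2)\Rightarrow(3)$. The first arrow is purely definitional: by Definition~\ref{def:strong_diffeo}, a strong blow-spherical homeomorphism at infinity is in particular a blow-spherical homeomorphism at infinity, so $(3)\Rightarrow(1)$. For $(1)\Rightarrow(2)$, I would simply invoke Proposition~\ref{propinvarianciamultiplicidaderelativa} with $p=q=\infty$: a blow-spherical homeomorphism $\varphi\colon X\to\widetilde X$ at infinity induces $\varphi'\colon X'_\infty\to\widetilde X'_\infty$ carrying ${\rm Smp}(\partial X'_\infty)$ onto ${\rm Smp}(\partial\widetilde X'_\infty)$ and preserving the relative multiplicity function, whence the ordered tuples agree, i.e. $k(X,\infty)=k(\widetilde X,\infty)$.

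The entire content lies in $(2)\Rightarrow(3)$, which I would prove by an explicit branch-by-branch construction. We may assume $X$ and $\widetilde X$ are unbounded, the bounded case giving empty tuples and an empty restriction $\varphi|_{X\setminus K}$, which is trivially a strong blow-spherical homeomorphism at infinity. Write $C(X,\infty)\cap\mathbb{S}^{n-1}=\{a_1,\dots,a_r\}$ and, using the Conical Structure Theorem at infinity as recalled in Remark~\ref{rel_mult_number_of_branches}, fix $R\gg1$ and a semialgebraic diffeomorphism $h\colon X\setminus B_R(0)\to{\rm Cone}_\infty(X\cap\mathbb{S}^{n-1}_R(0))$ with $\|h(x)\|=\|x\|$. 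Since $X$ is a curve, the link $X\cap\mathbb{S}^{n-1}_R(0)$ is finite, so $X\setminus B_R(0)$ decomposes into its connected components, the branches $\Gamma_{j,l}$ ($1\le l\le k_j$) tangent to $a_j$, and each branch is radially parametrized by $\gamma_{j,l}(t)=h^{-1}(t\,\omega_{j,l})$ with $\|\gamma_{j,l}(t)\|=t$ and $\gamma_{j,l}(t)/t\to a_j$ as $t\to\infty$. By Remark~\ref{rel_mult_number_of_branches}, $k_j=k_{X,\infty}(a_j,0)$, so the ordered tuple $k(X,\infty)$ is exactly the nondecreasing rearrangement of $(k_1,\dots,k_r)$; the same holds for $\widetilde X$ (with its own radius $\widetilde R$), directions $\tilde a_1,\dots,\tilde a_{\tilde r}$ and branches $\widetilde\Gamma_{m,l}$ parametrized by $\widetilde\gamma_{m,l}$.

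Now the hypothesis $k(X,\infty)=k(\widetilde X,\infty)$ forces $r=\tilde r$ and the equality of the two multisets $\{k_j\}$ and $\{\tilde k_m\}$, hence the existence of a bijection $\sigma$ of the index sets with $k_j=\tilde k_{\sigma(j)}$ for all $j$. Taking $R_0=\max\{R,\widetilde R\}$, I would define $\varphi\colon X\setminus B_{R_0}(0)\to\widetilde X\setminus B_{R_0}(0)$ branchwise by $\varphi(\gamma_{j,l}(t))=\widetilde\gamma_{\sigma(j),l}(t)$ for $1\le l\le k_j$ and $t\ge R_0$. Because the branches are precisely the connected components of $X\setminus B_{R_0}(0)$ and $\sigma$ matches each direction to one with the same number of branches, $\varphi$ is a well-defined semialgebraic bijection onto $\widetilde X\setminus B_{R_0}(0)$; as each $\gamma_{j,l}$ is a $C^1$ parametrization transverse to the spheres for $R_0$ large, $\varphi$ is a semialgebraic $C^1$ diffeomorphism. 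To see it is blow-spherical at infinity, compute $\rho_\infty^{-1}(\gamma_{j,l}(t))=(\gamma_{j,l}(t)/t,1/t)$ and $\rho_\infty^{-1}(\widetilde\gamma_{\sigma(j),l}(t))=(\widetilde\gamma_{\sigma(j),l}(t)/t,1/t)$; thus the conjugate $\rho_\infty^{-1}\circ\varphi\circ\rho_\infty$ extends continuously to $X'_\infty$ by sending $(a_j,0)\mapsto(\tilde a_{\sigma(j)},0)$, and since $\partial X'_\infty$ and $\partial\widetilde X'_\infty$ are finite this boundary map is a homeomorphism. Finally, for $R_0$ large $X\setminus B_{R_0}(0)$ and $\widetilde X\setminus B_{R_0}(0)$ are smooth $1$-manifolds, so ${\rm Sing}_1$ is empty on both and $\varphi$ is a $C^1$ diffeomorphism there; being a local diffeomorphism between smooth curves it is blow-spherical at every finite point as well, so $\varphi$ is a strong blow-spherical homeomorphism. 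Setting $K=\overline{B_{R_0}(0)}$ gives $(3)$.

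I expect the main obstacle to be the verification that the conjugated map genuinely extends to a homeomorphism of the full strict transforms and that the construction qualifies as \emph{strong} rather than merely blow-spherical. The first point is where the precise radial parametrization $\|\gamma_{j,l}(t)\|=t$ from the Conical Structure Theorem is essential, as it pins down the limits $(a_j,0)$ and $(\tilde a_{\sigma(j)},0)$ and guarantees continuity of $\varphi'$ across $\partial X'_\infty$; the second requires checking that, after enlarging $R_0$, there are no singular points at infinity so that the smooth-part hypothesis in Definition~\ref{def:strong_diffeo} is met and the $C^1$ regularity of the branch parametrizations is inherited by $\varphi$. A secondary subtlety worth flagging is conceptual rather than technical: $\sigma$ need only match relative multiplicities and may send $a_j$ to any $\tilde a_m$ with $\tilde k_m=k_j$, which is exactly the reason the tuple $k(\cdot,\infty)$, and not the configuration of tangent directions, is the complete invariant.
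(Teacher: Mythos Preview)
Your proposal is correct and follows essentially the same route as the paper: the same cycle $(3)\Rightarrow(1)\Rightarrow(2)\Rightarrow(3)$, the same appeal to Proposition~\ref{propinvarianciamultiplicidaderelativa} for $(1)\Rightarrow(2)$, and for $(2)\Rightarrow(3)$ the same branch-by-branch construction via the radial parametrizations $t\mapsto h^{-1}(t\,\omega_{j,l})$ coming from the Conical Structure Theorem (your $\gamma_{j,l}$ is the paper's $\beta_{j,l}$). If anything, you are slightly more explicit than the paper in checking the ``strong'' part of the conclusion (empty ${\rm Sing}_1$ outside a large ball and $C^1$ regularity of $\varphi$), which the paper essentially asserts without elaboration.
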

\begin{proof}
Assume that $C(X,\infty)\cap \mathbb{S}^{n-1}=\{a_1,...,a_r\}$ and $C(\widetilde{X},\infty)\cap \mathbb{S}^{n-1}=\{\widetilde{a}_1,...,\widetilde{a}_s\}$.
By Remark \ref{remarksimplepointdense}, ${\rm Smp}(\partial X'_{\infty})$ is an open dense subset of the $0$-dimensional part of $\partial X'_{\infty}$, hence ${\rm Smp}(\partial X'_{\infty})=\partial X'_{\infty}$.

Clearly, (3) $\Rightarrow$ (1).

(1) $\Rightarrow$ (2).
Assume that $X$ and $\widetilde{X}$ are blow-spherical homeomorphic at infinity. By Proposition \ref{propinvarianciamultiplicidaderelativa},  $k(X,\infty)=k(\widetilde X,\infty)$.

(2) $\Rightarrow$ (3). Assume that $k(X,\infty)=k(\widetilde X,\infty)$.
Thus, $r=s$ and by reordering the indices, if necessary, we may assume that $k_i=k_{X,\infty}(a_i,0)=k_{\widetilde{X},\infty}(\widetilde{a_i},0)$ for all $i=1,...,r$ and $k_1\leq k_2\leq ....\leq k_r$. Thus, it follows from Remark \ref{rel_mult_number_of_branches} and the Conical Structure Theorem at infinity for semialgebraic sets that there exists a constant $R_0\gg 1$ such that for all $R\geq R_0$, we have
$$
X\setminus B_R(0)=\bigcup_{j=1}^r\bigcup_{l=1}^{k_j}{\Gamma_{j,l}}
$$ 
and there is a semialgebraic diffeomorphism $h\colon X\setminus B_R(0)\rightarrow {\rm Cone}_{\infty}(X\cap \mathbb{S}^{n-1}_R(0))$ such that $\|h(x)\|=\|x\|$ and $h|_{X\cap \mathbb{S}^{n-1}_R(0)}=Id$ and, moreover, for each $j\in \{1,...,r\}$, $C(\Gamma_{j,l},\infty)\cap \mathbb{S}^{n-1}=\{a_j\}$ for all $l=1,...,k_j$. In particular, we consider $h|_{\Gamma_{j,l}}\colon\Gamma_{j,l}\rightarrow {\rm Cone}_{\infty}(\Gamma_{j,l}\cap \mathbb{S}^{n-1}_R(0))$. 

Now define the curve $\alpha_{j,l}\colon [R,+\infty)\rightarrow {\rm Cone}_{\infty}(\Gamma_{j,l}\cap \mathbb{S}^{n-1}_R(0))$ given by $\alpha_{j,l}(t)=t\frac{a_{j,l}}{|a_{j,l}|}$, where $\{a_{j,l}\}=\Gamma_{j,l}\cap \mathbb{S}^{n-1}_R(0)$. Thus, we define the curve $\beta_{j,l}\colon [R,+\infty)\rightarrow \Gamma_{j,l}$ by $\beta_{j,l}(t):=(h^{-1}\circ \alpha_{j,l})(t)$.

Analogously, we also have
$$
\widetilde{X}\setminus B_R(0)=\bigcup_{j=1}^r\bigcup_{l=1}^{k_j}{\widetilde{\Gamma}_{j,l}},
$$ 
 and there are semialgebraic diffeomorphisms $\widetilde{h}\colon \widetilde{X}\setminus B_R(0) \rightarrow {\rm Cone}_{\infty}(\widetilde{X}\cap \mathbb{S}^{n-1}_R(0))$, $\widetilde{\alpha}_{j,l}\colon [R,+\infty)\rightarrow {\rm Cone}_{\infty}(\widetilde{\Gamma}_{j,l}\cap \mathbb{S}^{n-1}_R(0))$ and $\widetilde{\beta}_{j,l}\colon [R,+\infty)\rightarrow \widetilde{\Gamma}_{j,l}$ and, moreover, for each $j$, $C(\widetilde{\Gamma}_{j,l},\infty)\cap \mathbb{S}^{n-1}=\{\widetilde{a}_j\}$ for all $l=1,...,k_j$.

Let $A=X\setminus B_R(0)$ and $\widetilde{A}=\widetilde{X}\setminus B_R(0)$ and define $\varphi\colon A\rightarrow \widetilde{A}$ by $\varphi(z)=\widetilde{\beta}_{j,l}\circ \beta^{-1}_{j,l}(z)$ if $z\in \Gamma_{j,l}$.
We have that $\varphi$ is a strong blow-spherical homeomorphism at infinity and $\varphi'\colon X'_{\infty}\rightarrow \widetilde{X}'_{\infty}$ is given by
$$
\varphi'(x,s)=\left\{\begin{array}{ll}
	\left(\frac{\varphi(\frac{x}{s})}{\|\varphi(\frac{x}{s})\|},\frac{1}{\|\varphi(\frac{x}{s})\|}\right),& \mbox{ if }s\not =0;\\
	(\widetilde{a}_j,0),& \mbox{ if } (x,s)=(a_j,0).
\end{array}\right.
$$
In order to see that, it is enough to prove that $\varphi$ is a blow-spherical homeomorphism at infinity. Let us prove that $\varphi'$ is continuous at each $(a_j,0)\in \partial A'_{\infty}$. Thus take $(a_j,0)\in \partial A'_{\infty}$ and consider a sequence $(x_k,s_k)_{k\in \mathbb{N}}\subset A'_{\infty}\setminus \partial A'_{\infty}$ such that $(x_k,s_k)\rightarrow (a_j,0)$. Thus, for any subsequence $\{(x_{k_i},s_{k_i})\}_{i\in \mathbb N}$, there is some $l\in \{1,...,k_j\}$ and a subsequence $\{(z_k,t_k)\}_{k\in \mathbb N}\subset\{(x_{k_i},s_{k_i})\}_{i\in \mathbb N}$ such that $\frac{z_k}{t_k}\in \Gamma_{j,l}$ for all $k$.
Since \begin{eqnarray*}
	\|\beta_{j,l}(t)\| & = & \|h^{-1}\circ \alpha_{j,l}(t)\|\\
	& = & \left\|h^{-1}\left(t\frac{a_{j,l}}{\|a_{j,l}\|}\right)\right\|\\
	& = & t,
\end{eqnarray*}
we have 
$$
\frac{z_k}{t_k}=\beta_{j,l}\left(\frac{1}{t_k}\right).
$$
Therefore, 
\begin{eqnarray*}
	\varphi'(z_k,t_k) & = & \left( \frac{\widetilde{\beta}_{j,l}(\frac{1}{t_k})}{\|\widetilde{\beta}_{j,l}(\frac{1}{t_k})\|},  \frac{1}{\|\widetilde{\beta}_{j,l}(\frac{1}{t_k})\|}\right).
\end{eqnarray*}
Since $\lim\limits_{t\to 0^+}\frac{\widetilde{\beta}_{j,l}(\frac{1}{t})}{\|\widetilde{\beta}_{j,l}(\frac{1}{t})\|}=\widetilde{a}_j$, we have
$\lim\limits_{k\rightarrow +\infty}{\varphi'(z_k,t_k)}=(\widetilde{a}_j,0).$
This shows that
$\lim\limits_{k\rightarrow +\infty}{\varphi'(x_k,s_k)}=(\widetilde{a}_j,0).$

Thus, $\varphi'$ is continuous at each $(a_j,0)$. Analogously, $(\varphi^{-1})'$ is continuous at each $(\widetilde{a}_j,0)\in \partial \widetilde{A}'_{\infty}$. Therefore, $\varphi$ is a strong blow-spherical homeomorphism at infinity. 
\end{proof}

\subsection{Normal forms for the classification at infinity}\label{sec:normal_forms}

Let $\mathcal{F}(\{0,1\};\mathbb{Z}_{\geq 0})$ be the set of all non-null  functions from $\{0,1\}$ to $\mathbb{Z}_{\geq 0}$. For each positive integer number $N$, let $\mathcal{A}_N$ be the subset of $(\mathcal{F}(\{0,1\};\mathbb{Z}_{\geq 0}))^N$ formed by all $(r_1,...,r_N)$ satisfying the following:
\begin{enumerate}
\item $r_l(0)\leq r_{l+1}(0)$ for all $l\in \{1,\cdots, N-1\}$; 
\item If $r_l(0)=r_{l+1}(0)$ then $r_l(1)\leq r_{l+1}(1)$.
\end{enumerate}
Let $\mathcal{A}=\bigcup\limits_{N=1}^{\infty}\mathcal{A}_N$.
Let $A=(r_1,...,r_N)\in \mathcal A$. For $j\in \{0, 1\}$ and $r_l(j)>0$, we define the following curves:

\begin{equation*}\label{def.normalform}
X_{A,1}=\left\{(x,y)\in \mathbb{R}^2; \prod_{l=1}^{N}\prod_{r=1}^{r_l(1)}{( (y-lx)^{2}-r(y+lx))}=0 \right\},
\end{equation*}
and 
\begin{equation*}\label{def.normalform_two}
	X_{A,0}=\left\{(x,y)\in \mathbb{R}^2; \prod_{l=1}^{N}\prod_{r=1}^{r_l(0)}{( (y-lx)-r)}=0 \right\}.
\end{equation*}
 Moreover, if $r_l(j)=0$ we define $X_{A,j}=\{0\}$.  Finally, we define the realization of $A=(r_1,...,r_N)$ to be the curve $X_A:=X_{A,0}\cup X_{A,1}$.

Thus, it follows from Proposition \ref{propblowspherical} and the definition of $A$, the following classification result:

\begin{proposition}\label{thmnormalform}
	For each real algebraic curve $X\subset \mathbb{R}^n$, there exists a unique $A\in \mathcal{A}$ such that $X_A$ and $X$ are blow-spherical homeomorphic at
	infinity.
\end{proposition}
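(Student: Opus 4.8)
The plan is to reduce Proposition \ref{thmnormalform} to the already-established equivalence ``blow-spherical homeomorphic at infinity $\iff$ equal invariant $k(\,\cdot\,,\infty)$'' from Proposition \ref{propblowspherical}, together with a bijective bookkeeping between the indexing set $\mathcal{A}$ and the set of possible invariants. Concretely, I would argue in three stages: first compute $k(X_A,\infty)$ for each $A\in\mathcal{A}$; second show that every real algebraic curve $X$ has $k(X,\infty)$ equal to $k(X_A,\infty)$ for some $A$ (existence of $A$); third show that the map $A\mapsto k(X_A,\infty)$ is injective on $\mathcal{A}$, so that the $A$ is unique.

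For the computation of the invariant, I would analyze the two pieces of $X_A=X_{A,0}\cup X_{A,1}$ separately. The factor $(y-lx)-r=0$ is the line of slope $l$ through an offset $r$, so its tangent cone at infinity is the single direction determined by slope $l$; each such line is one branch tangent to that direction, and for fixed $l$ there are exactly $r_l(0)$ parallel lines, contributing $r_l(0)$ branches. The factor $(y-lx)^2-r(y+lx)=0$ is a parabola opening in the direction of slope $l$; I would check (via Corollary \ref{corollary 2.16FernandesS:2020}, parametrizing $y-lx=o_\infty(t)$ while $y+lx\to\infty$) that its tangent cone at infinity is again the single direction of slope $l$, and that each such parabola consists of \emph{two} branches at infinity (the two ends), both tangent to that same direction. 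Hence the $r_l(1)$ parabolas contribute $2r_l(1)$ branches tangent to the slope-$l$ direction. Choosing the slopes $l=1,\dots,N$ guarantees that distinct values of $l$ give distinct tangent directions $a_l\in\mathbb{S}^1$, so the directions do not collide. Collecting branch counts, the tangent direction associated to slope $l$ carries $k_{X_A,\infty}(a_l,0)=r_l(0)+2r_l(1)$ branches, and directions with $r_l(0)=r_l(1)=0$ simply do not appear. By Remark \ref{rel_mult_number_of_branches} this number is exactly the relative multiplicity, so $k(X_A,\infty)$ is the tuple of the nonzero values $r_l(0)+2r_l(1)$, sorted increasingly; the ordering conditions (1) and (2) on $\mathcal{A}$ are the normalization that makes this sorted tuple well-defined from $A$.

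The existence step is where the main content lies, and it is essentially a realization statement: given the invariant $k(X,\infty)=(\eta_1,\dots,\eta_r)$ of an arbitrary curve $X$, I must produce $A\in\mathcal{A}$ with matching invariant. The key parity observation is that any real algebraic curve has an \emph{even} total number of branches at infinity: a one-dimensional real algebraic set is defined by a polynomial, and near infinity each unbounded real-algebraic branch comes paired with a ``return'' branch in the opposite direction along the same affine-real component, so $\sum_i \eta_i=\sum_l(r_l(0)+2r_l(1))$ must be even (this is the content alluded to in Proposition \ref{thm:realizable}). Given this, for each required multiplicity $\eta_i$ I can solve $\eta_i=r_{l_i}(0)+2r_{l_i}(1)$ in nonnegative integers: write $\eta_i=2q_i$ or $\eta_i=2q_i+1$ and set $(r_{l_i}(0),r_{l_i}(1))=(0,q_i)$ in the even case and $(1,q_i)$ in the odd case, assigning the slopes $l_i=1,2,\dots,r$ in increasing order and leaving all other $r_l$ equal to the zero function. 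Reordering to satisfy conditions (1)–(2) and observing that the sorted tuple of values $r_l(0)+2r_l(1)$ reproduces $(\eta_1,\dots,\eta_r)$ then gives $A$ with $k(X_A,\infty)=k(X,\infty)$, and Proposition \ref{propblowspherical} yields that $X_A$ and $X$ are blow-spherical homeomorphic at infinity.

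For uniqueness, I would show $A\mapsto k(X_A,\infty)$ is injective. Suppose $A$ and $B$ give the same sorted invariant tuple. The invariant only records the multiset of values $\{r_l(0)+2r_l(1)\}$, which a priori does not determine the pair $(r_l(0),r_l(1))$ — except that the chosen normalization forces $r_l(0)\in\{0,1\}$ to be the parity of $r_l(0)+2r_l(1)$, so each value in the multiset determines $r_l(0)$ (its parity) and then $r_l(1)$ (the remaining half) \emph{uniquely}; the ordering conditions (1) and (2) then pin down the order of the entries, recovering $A$ from the sorted tuple. Hence $A=B$. The main obstacle I anticipate is the parabola branch-count computation — verifying rigorously that $(y-lx)^2=r(y+lx)$ contributes exactly two branches at infinity both tangent to the same direction, and that no spurious tangent directions appear from the cross terms — since the realization and the injectivity arguments are then clean combinatorial bookkeeping once the per-factor invariants are in hand.
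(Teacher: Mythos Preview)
Your computation of $k(X_A,\infty)$ contains a concrete error that undermines both existence and uniqueness. You claim that the line $(y-lx)-r=0$ has ``tangent cone at infinity the single direction determined by slope $l$'' and contributes one branch. In fact its tangent cone at infinity is the whole line $\{y=lx\}$, which meets $\mathbb{S}^1$ in \emph{two} antipodal points $a_l$ and $-a_l$; the affine line contributes one branch to each. Likewise the parabola $(y-lx)^2=r(y+lx)$ contributes two branches, both tangent to $a_l$ and none to $-a_l$. Hence for each $l$ the curve $X_A$ has $r_l(0)+2r_l(1)$ branches at $a_l$ and $r_l(0)$ branches at $-a_l$; the invariant $k(X_A,\infty)$ is the sorted tuple of all the nonzero numbers in $\{r_l(0)+2r_l(1),\,r_l(0):l=1,\dots,N\}$, not just the $r_l(0)+2r_l(1)$. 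Your subsequent bookkeeping (choosing $(r_l(0),r_l(1))$ to hit each $\eta_i$, and recovering $A$ from the tuple) is built on the wrong invariant and does not go through.

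There is a second gap in your uniqueness step: you assert that ``the chosen normalization forces $r_l(0)\in\{0,1\}$,'' but the definition of $\mathcal{A}$ imposes no such restriction --- it only requires the lexicographic ordering in $(r_l(0),r_l(1))$. So even if the invariant were as you describe, you could not read off $r_l(0)$ as the parity of $r_l(0)+2r_l(1)$. The paper takes a different route: rather than realizing the abstract tuple $k(X,\infty)$, it passes to the projective closure $\overline{X}$, decomposes the germ at each point $c_l\in\overline{X}\cap L_\infty$ into analytic branches, and records for each branch whether its two real half-branches at infinity are tangent to opposite directions (line-like) or to the same direction (parabola-like); this pairing data is exactly what the pair $(r_l(0),r_l(1))$ is meant to encode, and the construction of $A$ is then direct from the geometry of $X$ rather than from the numerical invariant.
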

\begin{proof}
We consider $\overline{X}$ the projective closure of $X$ and $\{c_1,\cdots,c_N\}=\overline{X}\cap L_{\infty}$, where $L_{\infty}$ is the hyperplane at infinity. 

By taking local charts, it follows from Lemma 3.3.5 in \cite{Milnor:1968} that there exist an open neighborhood $V_{l}\subset \mathbb{RP}^n$ of $c_l$ and $\Upsilon_{l,1},\cdots, \Upsilon_{l,r_l}$ such that $\Upsilon_{l,i}\cap \Upsilon_{l,i'}=\{c_l\}$ whenever $i\neq i'$ and 
	$$Y\cap V_l=\bigcup_{i=1}^{r_l}{\Upsilon_{l,i}}$$
	and, moreover, for each $i\in \{1,\cdots, r_l\}$, there exists an analytic homeomorphism $\upsilon_{l,i}\colon (-\epsilon,\epsilon)\rightarrow \Upsilon_{l,i}$ with $\upsilon_i(0)=c_l$.
By shrinking  $V_{l}$, if necessary, we may assume that for each $i\in \{1,\cdots, s_l\}$, $\Upsilon_{l,i}\subset L_{\infty}$ or $\Upsilon_{l,i}\cap L_{\infty}=\{c_l\}$.
By reordering the indices, if necessary, there is $r_l> 0$ such that $\Upsilon_{l,i}\cap L_{\infty}=\{c_l\}$ for all $i\in \{1,\cdots, r_l\}$ and $\Upsilon_{l,i}\subset L_{\infty}$ for all $i\in \{r_l+1,\cdots, s_l\}$. By reordering the indices again, if necessary, we may assume that $r_l\leq r_{l+1}$, for all $j\in \{1,...,N\}$.

We denote the half-branch by $\Upsilon_{l,i}^{+}=\upsilon_{l,i}(0,\epsilon)$ and $\Upsilon_{l,i}^{-}=\upsilon_{l,i}(-\epsilon,0)$. So, denoting $\Gamma_{l,i}^+=\Upsilon_{l,i}^{+}\cap \R^n$ and $\Gamma_{l,i}^-=\Upsilon_{l,i}^{-}\cap \R^n$, we obtain that 
	$$\{\Gamma_{l,1}^+,\Gamma_{l,1}^-,\cdots, \Gamma_{l,r_l}^+,\Gamma_{l,r_l}^-\}_{l=1}^{N}
	$$ 
are all the branches of $X$ at infinity. For each $l\in \{1,..,N\}$, there is $a_l\in \mathbb{S}^{n-1}$ such that $\pi^{-1}(c_l)\cap \mathbb{S}^{n-1}=\{-a_l,a_l\}$, where $\pi\colon \R^n\setminus\{0\}\to \mathbb{RP}^{n-1}\cong L_{\infty}$ is the canonical projection.
Thus, for each $l\in \{1,..,N\}$, $C(\Gamma_{l,i},\infty)\cap \mathbb{S}^{n-1}\subset\{- a_l,a_l\}$. 

For each $l\in \{1,..,N\}$, by reordering the indices, if necessary, there are non-negative integer numbers $n_l, p_l$ and $z_l$ such that:
\begin{itemize}
 \item $C(\Gamma_{l,i}^+,\infty)=-C(\Gamma_{l,i}^-,\infty)$ for all $i\in \{1,...,z_l\}$;
 \item $C(\Gamma_{l,i}^+,\infty)\cap \mathbb{S}^{n-1}=C(\Gamma_{l,i}^-,\infty)\cap \mathbb{S}^{n-1}=\{- a_l\}$,  for all $i\in \{z_l+1,...,z_l+n_l\}$;
 \item $C(\Gamma_{l,i}^+,\infty)\cap \mathbb{S}^{n-1}=C(\Gamma_{l,i}^-,\infty)\cap \mathbb{S}^{n-1}=\{ a_l\}$, for all $i\in \{z_l+n_l+1,...,r_l=z_l+n_l+p_l\}$.
\end{itemize}

By changing $a_l$ by $-a_l$, if necessary, we may assume that $n_l\leq p_l$.
Thus, we define $\tilde\Gamma_{l,i}^+=\Gamma_{l,i}^+$ for all $i\in \{1,...,r_l\}$ and
$$
\tilde \Gamma_{l,i}^-=\left\{\begin{array}{ll}
\Gamma_{l,i}^-,\mbox{ if }i\in \{1,...,z_l\}\cup \{z_l+n_l+1,...,r_l\}\\
\Gamma_{l,i+n_l}^-,\mbox{ if }i\in \{z_l+1,...,z_l+n_l\}.
\end{array}\right.
$$
Thus, we have the following:
\begin{itemize}
 \item $C(\tilde\Gamma_{l,i}^+,\infty)=-C(\tilde\Gamma_{l,i}^-,\infty)$ for all $i\in \{1,...,z_l+n_l\}$;
 \item $C(\tilde\Gamma_{l,i}^+,\infty)=C(\tilde\Gamma_{l,i}^-,\infty)\}$,  for all $i\in \{z_l+n_l+1,...,r_l\}$.
\end{itemize}

For each $l\in \{1,..,N\}$, we define $r_l\colon\{0,1\}\to \mathbb{N}$ by $r_l(0)=z_l+n_l$ and $r_l(1)=p_l$.

By reordering the indices, if necessary, we may assume that 
\begin{enumerate}
\item $r_l(0)\leq r_{l+1}(0)$ for all $l\in \{1,\cdots, N-1\}$; 
\item If $r_l(0)=r_{l+1}(0)$ then $r_l(1)\leq r_{l+1}(1)$.
\end{enumerate}
Therefore, $A={(r_1,...,r_N)}\in \mathcal{A}$ and by Proposition \ref{propblowspherical}, we have that $X_A$ is blow-spherical homeomorphic at infinity to $X$. The uniqueness of $A$ follows from the definition of $\mathcal{A}$ and Proposition \ref{propblowspherical}.
\end{proof}
\begin{remark}\label{remark:spatial}
Proposition \ref{thmnormalform} says in particular that any spacial real algebraic curve is blow-spherical homeomorphic at infinity to a plane real algebraic curve.
\end{remark}
Note that Remark \ref{remark:spatial} is not true in the global case, as it is shown in the next example.

\begin{example}
	Let $Y$ and $Z$ be the algebraic curves in $\mathbb{R}^3$ given by
	$$Y=\textstyle{\left\{(x,y,z)\in \mathbb{R}^3;(y-1)(y-\frac{x^2}{2})(y-x^2)(y-2x^2)=0\mbox{ and }z=0\right\}}$$
	and 
	$$Z=\textstyle{\left\{(x,y,z)\in \mathbb{R}^3;\left((x-\frac{1}{2})^2+(y-\frac{1}{2})^2+z^2-\frac{1}{2}\right)=0\mbox{ and } x=y\right\}}.$$
Thus the spatial algebraic curve $X=Y\cup Z$ is not blow-spherical equivalent to a plane curve (see figure \ref{fig:spatial}). 
\end{example}

\begin{figure}[H]
	\centering
	\includegraphics[scale=0.65]{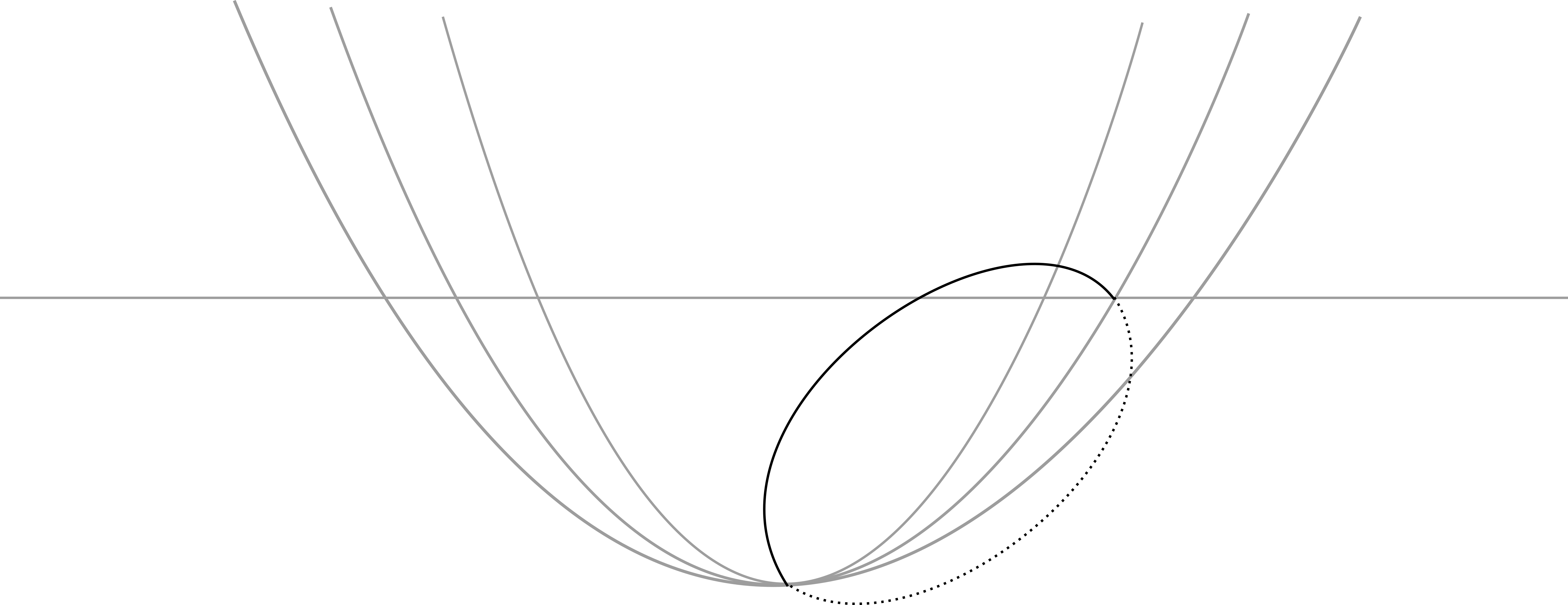}
	\caption{The spatial algebraic curve $X$}
	\label{fig:spatial}
\end{figure}

\subsection{Realization of the invariant $k(\cdot,\infty)$}\label{sec:realization}

\begin{definition}
	For each positive integer $n$, let $\mathcal{N}_n$ be the set of all $(\eta_1, \eta_2,\cdots,\eta_n)\in (\mathbb{Z}_{>0})^n$ such that $\eta _1\leq \eta _2\leq \cdots \leq \eta_n$. Let $\mathcal{N}=\bigcup\limits_{n=1}^{\infty} \mathcal{N}_n$.
\end{definition}
%
By definition, we have that if $X\subset \mathbb{R}^n$ is a semialgebraic curve then $k(X,\infty)\in\mathcal{N}$. Reciprocally, for $\eta=(\eta_1, \eta_2,\cdots,\eta_N)\in \mathcal{N}$, it is easy to find a semialgebraic curve $X\subset \mathbb{R}^n$ such that $k(X,\infty)=\eta$. For example,  
\begin{equation*}
X=\bigcup_{l=1}^{N}\bigcup_{r=1}^{\eta_l}\left\{(x,y)\in \mathbb{R}^2; y-lx-r=0 \mbox{ and }y+lx\geq 0\right\}.
\end{equation*}
\begin{definition}
	We say that $\eta=(\eta_1, \eta_2,\cdots,\eta_r)\in \mathcal{N}$ is {\bf algebraically realizable}, if there exists a real algebraic curve $X\subset \mathbb{R}^n$ such that $k(X,\infty)=\eta$.
\end{definition}
The next result gives a necessary and sufficient condition for a $\eta=(\eta_1, \eta_2,\cdots,\eta_n)\in \mathcal{N}$ to be algebraically realizable. Let $v=(v_1,\cdots,v_n)\in \mathbb{R}^n$ a vector, we denote by $\|\cdot\|_1$ the norm
 given by $\|v\|_1=|v_1|+\cdots+|v_n|$. 
\begin{proposition}\label{thm:realizable}
	$\eta=(\eta_1,\cdots,\eta_k)\in \mathcal{N}$ is algebraically realizable if and only if $\|\eta\|_{1} \equiv 0 \pmod{2}$.
\end{proposition}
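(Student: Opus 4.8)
The plan is to prove the two implications of the equivalence separately; the parity condition is a reflection of the fact that the ends at infinity of a \emph{genuine} real algebraic curve occur in pairs. Throughout I write $\|\eta\|_{1}=\eta_{1}+\cdots+\eta_{k}$, and I first record the reformulation that makes the statement transparent: by Remark \ref{rel_mult_number_of_branches}, if $\eta=k(X,\infty)$ then $\|\eta\|_{1}=\sum_{j}k_{X,\infty}(a_{j},0)$ is exactly the total number of branches of $X$ at infinity, the sum running over all $a_{j}\in C(X,\infty)\cap\mathbb{S}^{n-1}$. So the content of the proposition is that this total is even precisely for the algebraically realizable tuples.

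For the \emph{necessity} I would pass to the projective closure $\overline{X}\subset\mathbb{RP}^{n}$ and its points at infinity $\overline{X}\cap L_{\infty}=\{c_{1},\dots,c_{N}\}$, exactly as in the proof of Proposition \ref{thmnormalform}. By Milnor's local structure lemma, near each $c_{l}$ the set $\overline{X}$ is a finite union of analytic arcs $\Upsilon_{l,i}\cong(-\epsilon,\epsilon)$ with $\upsilon_{l,i}(0)=c_{l}$. The key observation is that, since $\overline{X}$ is the Zariski closure of the affine curve $X$, no irreducible component of $\overline{X}$ is contained in $L_{\infty}$, and hence no arc $\Upsilon_{l,i}$ lies in $L_{\infty}$; so for $\epsilon$ small each arc meets $L_{\infty}$ only at $c_{l}$, and its two half-arcs $\Gamma_{l,i}^{+}=\upsilon_{l,i}(0,\epsilon)\cap\R^{n}$ and $\Gamma_{l,i}^{-}=\upsilon_{l,i}(-\epsilon,0)\cap\R^{n}$ are \emph{two} distinct affine branches of $X$ running to infinity. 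Thus the branches of $X$ at infinity are partitioned into pairs, one pair per local arc, whence $\|\eta\|_{1}=2\sum_{l}r_{l}$ is even.

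For the \emph{sufficiency}, assume $\|\eta\|_{1}\equiv0\pmod 2$, so that $\eta$ has an even number of odd entries. I would realize $\eta$ by a plane curve built from the two blocks already visible in the normal forms $X_{A,0},X_{A,1}$: for a slope $s$ with associated direction $a\propto(1,s)\in\mathbb{S}^{1}$, the line $\{y=sx+c\}$ contributes one branch tangent to $a$ and one tangent to $-a$, whereas the parabola $\{(y-sx)^{2}=c\,(y+sx)\}$ contributes two branches, both tangent to $a$ if $c>0$ and both tangent to $-a$ if $c<0$. I would then pair the odd entries $(\eta_{i},\eta_{i'})$ and assign each pair a distinct slope, realizing the pair by one line together with $(\eta_{i}-1)/2$ parabolas tangent to $a$ and $(\eta_{i'}-1)/2$ parabolas tangent to $-a$; each even entry $\eta_{i}$ I realize by $\eta_{i}/2$ parabolas tangent to a fresh direction whose antipode is left unused. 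Choosing all slopes distinct keeps every asymptotic direction distinct, so the positive relative multiplicities of the resulting algebraic curve $X$ are exactly the prescribed integers and $k(X,\infty)=\eta$. Equivalently, one may simply exhibit the normal form $X_{A}$ of Proposition \ref{thmnormalform} for the datum $A\in\mathcal{A}$ that records this distribution.

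I expect the real content to sit in the necessity direction: one must argue rigorously that every affine branch at infinity is one of the two half-branches of a local arc of $\overline{X}$ through $L_{\infty}$, i.e.\ that the ends genuinely pair up. This is precisely where algebraicity (as opposed to the merely semialgebraic setting, in which the single half-lines of the example preceding this proposition are allowed) is used, and it is the source of the obstruction. The sufficiency is then essentially bookkeeping, the only points requiring care being to keep the chosen asymptotic directions pairwise distinct and to remember that a single odd multiplicity can be produced only in the company of an odd multiplicity at the antipodal direction.
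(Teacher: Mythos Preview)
Your proposal is correct and follows essentially the same approach as the paper: for necessity both pass to the projective closure and use Milnor's local structure to see that each analytic arc through a point of $L_{\infty}$ contributes exactly two affine branches at infinity, forcing $\|\eta\|_1$ to be even; for sufficiency both build an explicit plane curve from lines (contributing one branch to each antipodal direction) and parabolas $(y-sx)^2=\pm r(y+sx)$ (contributing two branches to a single direction), pairing the odd entries exactly as you describe. Your necessity argument is in fact slightly cleaner, since you observe directly that no local arc of $\overline{X}$ can lie in $L_{\infty}$ (as no component of the Zariski closure does), whereas the paper carries along the possibility $s_l>r_l$ from the proof of Proposition~\ref{thmnormalform} and then sums $k_{X,\infty}(b_l)+k_{X,\infty}(-b_l)=2r_l$ over a decomposition into antipodal pairs and singletons.
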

\begin{proof}
Assume that $\eta=(\eta_1,\cdots,\eta_k)\in \mathcal{N}$ is algebraically realizable, that is, there exists an unbounded real algebraic curve $X\subset \mathbb{R}^n$ such that $\eta=k(X,\infty)$. 
We consider $\overline{X}$ the projective closure of $X$ and $\{c_1,\cdots,c_N\}=\overline{X}\cap L_{\infty}$, where $L_{\infty}$ is the hyperplane at infinity. 

By the proof of Proposition \ref{thmnormalform}, we obtain that there exist an open neighbourhood $V_{l}\subset \mathbb{RP}^n$ of $c_l$ and $\Upsilon_{l,1},\cdots, \Upsilon_{l,r_l}$ such that $\Upsilon_{l,i}\cap \Upsilon_{l,i'}=\{c_l\}$ whenever $i\neq i'$, 
	$$Y\cap V_l=\bigcup_{i=1}^{r_l}{\Upsilon_{l,i}}$$
	and, moreover, for each $i\in \{1,\cdots, r_l\}$, there exists an analytic homeomorphism $\upsilon_{l,i}\colon (-\epsilon,\epsilon)\rightarrow \Upsilon_{l,i}$ with $\upsilon_i(0)=c_l$.
Additionally, there is $r_l> 0$ such that $\Upsilon_{l,i}\cap L_{\infty}=\{c_l\}$ for all $i\in \{1,\cdots, r_l\}$ and $\Upsilon_{l,i}\subset L_{\infty}$ for all $i\in \{r_l+1,\cdots, s_l\}$. By reordering the indices again, if necessary, we may assume that $r_l\leq r_{l+1}$, for all $j\in \{1,...,N\}$.

We denote the half-branch by $\Upsilon_{l,i}^{+}=\upsilon_{l,i}(0,\epsilon)$ and $\Upsilon_{l,i}^{-}=\upsilon_{l,i}(-\epsilon,0)$. So, denoting $\Gamma_{l,i}^+=\Upsilon_{l,i}^{+}\cap \R^n$ and $\Gamma_{l,i}^-=\Upsilon_{l,i}^{-}\cap \R^n$, we obtain that 
	$$\{\Gamma_{l,1}^+,\Gamma_{l,1}^-,\cdots, \Gamma_{l,r_l}^+,\Gamma_{l,r_l}^-\}_{l=1}^{N}
	$$ 
are all the branches of $X$ at infinity. For each $l\in \{1,..,k\}$, there is $b_l\in \mathbb{S}^{n-1}$ such that $\pi^{-1}(c_l)\cap \mathbb{S}^{n-1}=\{-b_l,b_l\}$, where $\pi\colon \R^n\setminus\{0\}\to \mathbb{RP}^{n-1}\cong L_{\infty}$ is the canonical projection. Thus, for each $l\in \{1,..,k\}$, $C(\Gamma_{l,i},\infty)\cap \mathbb{S}^{n-1}\subset\{- b_l,b_l\}$.
By Remark \ref{rel_mult_number_of_branches}, 
$$
k_{X,\infty}(b_l,0)+k_{X,\infty}(-b_l,0)=2r_l\equiv 0 \pmod{2},
$$
where $k_{X,\infty}(v)$ is defined to be zero if $v\not\in Smp(\partial X')$.

Assume that $C(X,\infty)\cap \mathbb{S}^{n-1}=\{a_1,...,a_k\}$ and $\eta_l=k_{X,\infty}(a_l,0)$ for all $l\in \{1,...,k\}$.

We consider the following decomposition of $\{1,...,k\}$: 
$$
\{l_1,...,l_{2s}\}=\{l\in \{1,...,k\};- a_l,a_l\in C(X,\infty)\}
$$ 
and 
$$
\{l_{2s+1},...,l_k\}= \{1,...,k\}\setminus \{l_1,...,l_{2s}\}
$$
and such that $a_{l_i}=-a_{l_{i+s}}$ for all $i\in \{1,...,s\}$.
	Therefore, by writing $k_{X,\infty}(a)$ instead of $k_{X,\infty}(a,0)$, we have
	\begin{eqnarray*}
		\|\eta\|_1& = & k_{X,\infty}(a_{1_1})+k_{X,\infty}(a_{l_{1+s}})+\cdots +k_{X,\infty}(a_{1_s})+k_{X,\infty}(a_{l_{2s}})\\
		& &+ k_{X,\infty}(a_{1_{2s+1}})+k_{X,\infty}(-a_{1_{2s+1}})+\cdots +k_{X,\infty}(a_{1_k}))+k_{X,\infty}(-a_{l_{k}})\\
		& = & 2r_{l_1} +\cdots + 2r_{l_s}+2r_{l_{2s+1}} +\cdots + 2r_{l_k} \\
		& \equiv & 0 \pmod{2}.
	\end{eqnarray*}

For the converse, assume that $\eta=(\eta_1,\cdots,\eta_k)\in \mathcal{N}$ satisfies $\|\eta\|_{1} \equiv 0 \pmod{2}$. Thus, $\#\{j;\eta_j \equiv 1 \pmod{2}\}\equiv 0 \pmod{2}$. Let $J=\{j_1\leq ... \leq j_{2m}\}\subset I:=\{1,...,k\}$ be the indices such that $\eta_{j}\equiv 1 \pmod{2}$ if and only if $j\in J$. For each $j_i\in J$, let $n_i$ be the non-negative integer number such that $\eta_{j_i}=2n_i+1$. Let $I\setminus J=\{q_1,...,q_N\}$. We consider the following three algebraic curves
\begin{equation*}
X^+=\left\{(x,y)\in \mathbb{R}^2; \prod_{l=1}^{m}(y-lx)\prod_{r=1}^{n_l}{((y-lx)^{2}-r(y+lx))}=0 \right\},
\end{equation*}
\begin{equation*}
X^-=\left\{(x,y)\in \mathbb{R}^2; \prod_{l=1}^{m}\prod_{r=1}^{n_{m+l}}{((y-lx)^{2}+r(y+lx))}=0 \right\}
\end{equation*}
and
\begin{equation*}
X^0=\left\{(x,y)\in \mathbb{R}^2; \prod_{l=1}^{N}\prod_{r=1}^{\eta_{q_l}}{((y-(m+l)x)^{2}-r(y+(m+l)x))}=0 \right\}
\end{equation*}
Then, for $X=X_0\cup X^+\cup X^-$, we have that $k(X,\infty)=\eta$.

\end{proof}

\subsection{Some considerations on the global case}
In the global case, the problem of classification is harder. For instance, two homeomorphic algebraic curves having the same relative multiplicities may not be blow-spherical homeomorphic, as we can see in the next example. 

\begin{example}
Let 
$$\textstyle{X=\left\{(x,y)\in \mathbb{R}^2;p(x,y)\left(x^2-y^2-y^3\right) \left((x+\frac{7\sqrt{12}}{8})^2+(y-3)^2-\frac{76}{64}\right)=0\right\}}$$ 
and 
$$\textstyle{\widetilde{X}=\left\{(x,y)\in \mathbb{R}^2;q(x,y)\left(x^2-y^2-y^3\right) \left((x+\frac{7\sqrt{12}}{8})^2+(y-3)^2-\frac{76}{64}\right)=0\right\}},$$
where $p$ and $q$ are the polynomials given by $p(x,y)=((x-\frac{7\sqrt{12}}{8})^2+(y-3)^2-\frac{76}{64})$ and $q(x,y)=((x+\frac{27\sqrt{80}}{28})^2+(y-5)^2-\frac{864}{784})$. Then $X$ and $\widetilde{X}$
are homeomorphic algebraic curves and there is a bijection $\sigma\colon {\rm Sing}(X)\cup \{\infty\}\to {\rm Sing}(\widetilde{X})\cup \{\infty\}$ such that $\sigma(\infty)=\infty$ and $k(X,p)=k(\widetilde{X},\sigma(p))$ for all $p\in {\rm Sing}(X)\cup \{\infty\}$. However, $X$ and $\widetilde{X}$ are not blow-spherical homeomorphic (see Figures \ref{fig1} and \ref{fig2}).
\end{example}

\begin{figure}[h!]
	\begin{tabular}{cc}
		\begin{minipage}[c][2.5cm][c]{5.5cm}
			\centering \includegraphics[scale=0.07]{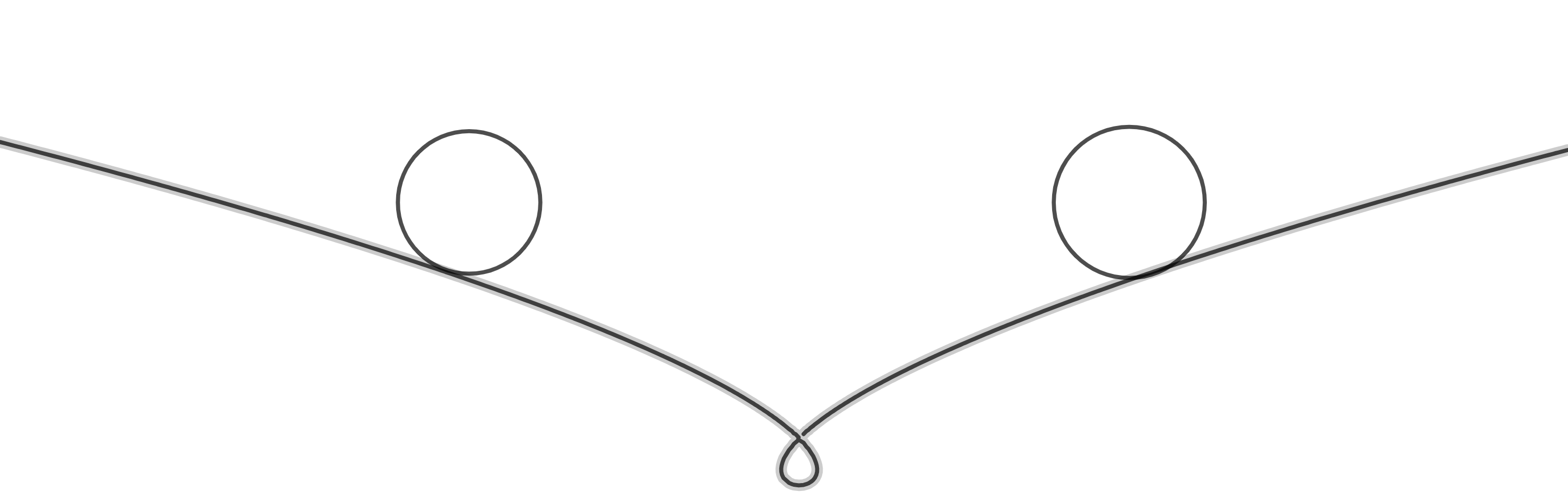}
			\caption{Curve $X$}\label{fig1}
		\end{minipage}
		&
		\begin{minipage}[c][2.5cm][c]{5.5cm}
			\centering \includegraphics[scale=0.07]{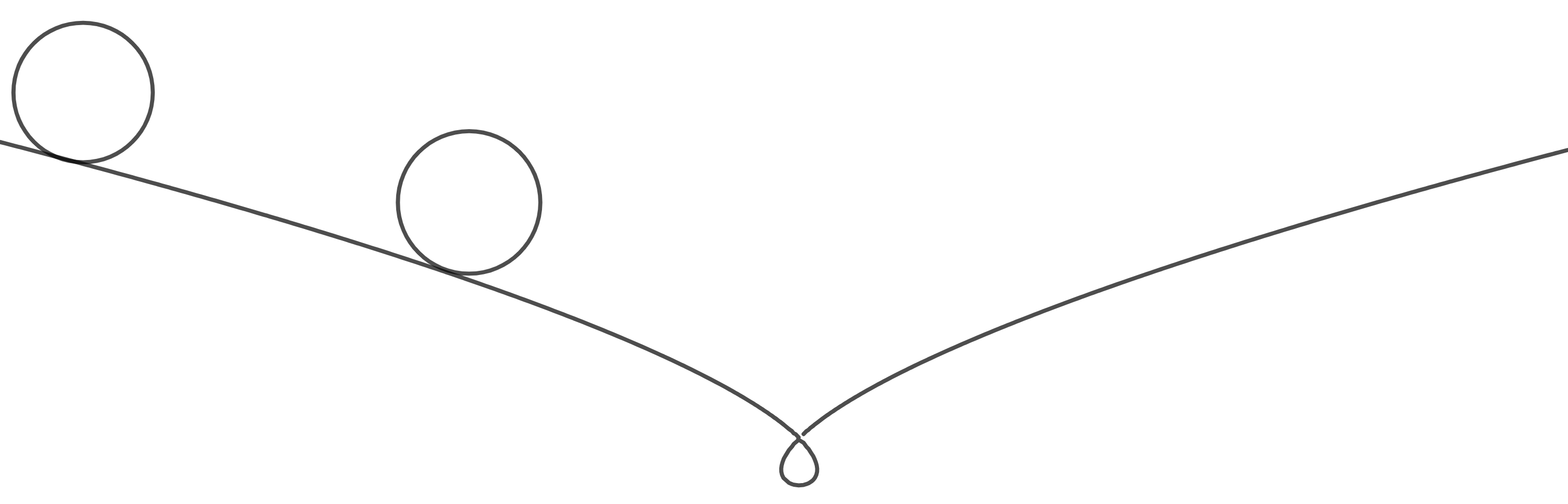}
			\caption{Curve $\widetilde{X}$}\label{fig2}
		\end{minipage}
	\end{tabular}
\end{figure}

Thus, we need of the following notion:

\begin{definition}
We say that a homeomorphism $\phi\colon X\to \widetilde{X}$ between two analytic curves is a {\bf tangency-preserving homeomorphism} if, for each $p\in X\cup \{\infty\}$, two half-branches $\Gamma_1$ and $\Gamma_2$ of $(X,p)$ are tangent at $p$ if and only if $\phi(\Gamma_1)$ and $\phi(\Gamma_2)$ are tangent at $\phi(p)$.
\end{definition}

\begin{proposition}
	Let $X,\widetilde X\subset \R^n$ be two connected  real algebraic curves. Then the following statements are equivalent:
	\begin{enumerate}
		\item [(1)] $X$ and $\widetilde X$ are blow-spherical homeomorphic;
		\item [(2)] There is a tangency-preserving homeomorphism $\phi\colon X\to \widetilde X$;
		\item [(3)] $X$ and $\widetilde{X}$ are strongly blow-spherical homeomorphic.
	\end{enumerate}
\end{proposition}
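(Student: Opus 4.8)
The plan is to prove the cycle $(3)\Rightarrow(1)\Rightarrow(2)\Rightarrow(3)$. The implication $(3)\Rightarrow(1)$ is immediate, since by Definition~\ref{def:strong_diffeo} a strong blow-spherical homeomorphism is in particular a blow-spherical homeomorphism.

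For $(1)\Rightarrow(2)$, I would show that a blow-spherical homeomorphism $\varphi\colon X\to\widetilde X$ is automatically tangency-preserving. Fix $p\in X\cup\{\infty\}$ and put $q=\lim_{x\to p}\varphi(x)$; by definition $\varphi$ extends to a homeomorphism $\varphi'\colon X'_p\to\widetilde X'_q$ carrying $\partial X'_p$ onto $\partial\widetilde X'_q$. The key observation is that a half-branch $\Gamma$ of $(X,p)$ with tangent ray $v\in\mathbb{S}^{n-1}$ has a strict transform that is an arc in $X'_p$ accumulating exactly at the boundary point $(v,0)$ (cf. Remark~\ref{remarksimplepointcone}), so two half-branches are tangent at $p$ precisely when their strict transforms accumulate at the same point of $\partial X'_p$. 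Since $\varphi'$ sends the strict transform of $\Gamma$ onto that of $\varphi(\Gamma)$ and restricts to a homeomorphism of boundaries, tangent half-branches go to tangent half-branches and conversely; hence $\varphi$ itself is tangency-preserving.

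The heart of the argument is $(2)\Rightarrow(3)$. Starting from a tangency-preserving homeomorphism $\phi\colon X\to\widetilde X$, I would first note that the number of half-branches at a point is a local topological invariant and that, for an analytic curve germ, a point lies off ${\rm Sing}_1$ exactly when it carries precisely two, mutually non-tangent, half-branches; since $\phi$ preserves both the count and the tangency relation, it carries ${\rm Sing}_1(X)$ onto ${\rm Sing}_1(\widetilde X)$. Moreover, at each $p\in{\rm Sing}_1(X)\cup\{\infty\}$ the tangency relation partitions the half-branches into tangency classes (a class being the half-branches sharing a tangent ray), and $\phi$ matches this partition at $p$ with the one at $\phi(p)$, whence $k(X,p)=k(\widetilde X,\phi(p))$ for every such $p$. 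Then, exactly as in the proof of Proposition~\ref{propblowspherical} but localized at each point, I would use the local conical structure theorem together with the blow-spherical classification of real analytic curve germs from \cite{Sampaio:2022b} to construct, near every $p\in{\rm Sing}_1(X)\cup\{\infty\}$, a local strong blow-spherical homeomorphism onto a neighbourhood of $\phi(p)$ sending each half-branch to the corresponding half-branch and respecting tangent rays. On the compact smooth arcs joining consecutive singular points---where $X$ and $\widetilde X$ are genuine $C^1$ one-manifolds---I would interpolate between the boundary data prescribed at the two endpoints by a $C^1$ diffeomorphism.

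The main obstacle is the global assembly of these local pieces into a single homeomorphism $X\to\widetilde X$ that is at once a $C^1$ diffeomorphism off the singular set and blow-spherical at every point. Here the combinatorial data carried by $\phi$---which half-branches pair into branches, which arcs join which singular points, and how the tangency classes correspond---is precisely what guarantees that the locally defined maps are mutually compatible, and connectedness of $X$ and $\widetilde X$ ensures this matching describes one consistent gluing pattern. The delicate verification is $C^1$-smoothness across the interface between the conical regions (near singular points and infinity) and the interpolating diffeomorphisms on the compact arcs, which I would arrange by taking the interpolations to agree to first order with the radial conical normal forms on the transition spheres. Once the glued map and its inverse are checked to extend continuously to the strict transforms at every point, as in Proposition~\ref{propblowspherical}, the result is a strong blow-spherical homeomorphism, closing the cycle.
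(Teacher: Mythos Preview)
Your proposal is correct and follows essentially the same route as the paper: the implication $(3)\Rightarrow(1)$ is trivial, $(1)\Rightarrow(2)$ comes from the fact that the boundary homeomorphism $\varphi'|_{\partial X'_p}$ identifies tangent directions (the paper phrases this as an appeal to Propositions~\ref{propinvarianciamultiplicidaderelativa} and~\ref{coneinvarianteblow}, but your direct argument via accumulation points of strict transforms is exactly the underlying content), and $(2)\Rightarrow(3)$ proceeds by building local strong blow-spherical models near each singular point (the paper invokes \cite[Theorem~6.19]{Sampaio:2022b}) and near infinity (Proposition~\ref{propblowspherical}), then gluing across the remaining compact smooth arcs by $C^1$ diffeomorphisms chosen compatibly with~$\phi$. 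Your explicit attention to making the local models send each half-branch to its $\phi$-image is precisely the compatibility needed for the gluing, which the paper handles more tersely via bump functions.
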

\begin{proof}
It follows from Propositions \ref{propinvarianciamultiplicidaderelativa} and \ref{coneinvarianteblow} that $(1)\Rightarrow (2)$. 

Since $(3)\Rightarrow (1)$, we only have to prove $(2)\Rightarrow (3)$. Assume that there is a tangency-preserving homeomorphism $\phi\colon X\to \widetilde X$.

Let ${\rm Sing}_1(X)=\{p_1,...,p_r\}$ and ${\rm Sing}_1(\widetilde X)=\{\widetilde p_1,...,\widetilde p_s\}$. Since $\phi$ is a tangency-preserving homeomorphism, it follows from \cite[Proposition 6.9]{Sampaio:2022b} that $\phi({\rm Sing}_1(X))={\rm Sing}_1(\widetilde X)$ and, in particular, $r=s$. Thus, we assume that $\phi(p_i)=\widetilde p_i$ for all $i\in \{1,...,r\}$.

By following the proof of Theorem 6.19 in \cite{Sampaio:2022b}, we can find $\epsilon>0$ and a strong blow-spherical homeomorphism $\varphi\colon \bigcup\limits_{i=1}^r (X\cap B_{\epsilon}(p_i))\to \bigcup\limits_{i=1}^r(\widetilde X\cap B_{\epsilon}(\widetilde p_i))$ such that for each $i\in \{1,...,r\}$, $\|\varphi(x)-\widetilde p_i\|=\|x-p_i\|$ for all $x\in X\cap B_{\epsilon}(p_i)$.

It follows from Proposition \ref{propblowspherical} that there exists a strong blow-spherical homeomorphism $h\colon X\setminus B_{R}(0)\rightarrow \widetilde{X}\setminus B_{R}(0)$ such that $\|h(x)\|=\|x\|$ for all $x\in X\setminus B_{R}(0)$. 

Now, we define the following set 
$$X_{\epsilon, R}=\left( X\cap B_{2R}(0)\right)\setminus \left\{B_{\epsilon/2}(p_1)\cup \cdots \cup B_{\epsilon/2}(p_s)\right\},$$
and
$$\widetilde{X}_{\epsilon, R}=\left(\widetilde{X}\cap B_{2R}(0)\right)\setminus \left\{B_{\epsilon/2}(p_1)\cup \cdots \cup B_{\epsilon/2}(p_s)\right\}.$$

Note that $X_{\epsilon,R}$ (resp. $\widetilde{X}_{\epsilon, R}$) is a finite union of compact one-dimensional with boundary, i.e, $X_{\epsilon,R}=\bigcup_{i=1}^{l_0}{X_{\epsilon,R}^i}$ (resp. $\widetilde{X}_{\epsilon,R}=\bigcup_{i=1}^{l_0}{\widetilde{X}_{\epsilon,R}^i}$), where each $X_{\epsilon,R}^i$ (resp. $\widetilde{X}_{\epsilon,R}^i$) is diffeomorphic to the compact interval $[0,1]$. Thus there is a diffeomorphism $\Phi\colon X_{\epsilon,R}\rightarrow \widetilde{X}_{\epsilon, R}$ such that each $\Phi(X_{\epsilon,R}^i)$ is contained in the connected component of $\widetilde{X}\setminus {\rm Sing}_1(\widetilde{X})$ which contains $\phi(X_{\epsilon,R}^i)$. By using standard arguments of bump functions, we may define a strong blow-spherical homeomorphism $F\colon X\rightarrow \widetilde{X}$ such that 

\[   
F(x) = 
\begin{cases}
	\varphi(x), &\quad\text{if} \ x\in X; \|x-p_j\|\leq \epsilon/2\\
	h(x), &\quad\text{if} \ x\in E_i; \|x\|\geq 2R\\ 
	\Phi(x), &\quad\text{if} \ x\in X_{2\epsilon, R/2},
\end{cases}
\]
which finishes the proof.
\end{proof}


\end{document}